

\documentclass[]{amsart}




\usepackage{amssymb}
\usepackage{hyperref}
\usepackage{url}



\title{A Strong Duality Principle for~\\Equivalence Couplings and Total Variation} 

\subjclass[2020]{60A10; 03E15; 28A35; 90C46.}

\keywords{coupling; equivalence relation; total variation; duality; optimal transport; Kantorovich duality; Borel equivalence relation; smoothness; hypersmoothness}



\author{Adam~Quinn~Jaffe}
\address{University of California, Berkeley, United States of America}
\email{aqjaffe@berkeley.edu} 

\newcommand{\R}{\mathbb{R}}
\newcommand{\C}{\mathbb{C}}
\newcommand{\N}{\mathbb{N}}
\newcommand{\Q}{\mathbb{Q}}
\newcommand{\M}{\mathbb{M}}
\newcommand{\W}{\mathbb{W}}

\DeclareMathOperator{\atom}{at}

\DeclareMathOperator{\TV}{TV}

\DeclareSymbolFont{bbold}{U}{bbold}{m}{n}
\DeclareSymbolFontAlphabet{\mathbbold}{bbold}
\newcommand{\ind}{\mathbbold{1}}

\renewcommand{\P}{\mathbb{P}}
\newcommand{\E}{\mathbb{E}}
\newcommand{\F}{\mathcal{F}}
\newcommand{\G}{\mathcal{G}}

\renewcommand{\S}{\mathcal{S}}
\newcommand{\tail}{\mathcal{T}}
\newcommand{\invar}{\mathcal{I}}
\newcommand{\exch}{\mathcal{E}}

\renewcommand{\emptyset}{\varnothing}

\theoremstyle{plain}
\newtheorem{theorem}{Theorem}[section]

\newtheorem{conjecture}[theorem]{Conjecture}
\newtheorem{corollary}[theorem]{Corollary}

\newtheorem{definition}[theorem]{Definition}

\newtheorem{lemma}[theorem]{Lemma}

\newtheorem{proposition}[theorem]{Proposition}

\providecommand{\customgenericname}{}
\newcommand{\newcustomtheorem}[2]{%
	\newenvironment{#1}[1]
	{%
		\renewcommand\customgenericname{#2}%
		\renewcommand\theinnercustomgeneric{##1}%
		\innercustomgeneric
	}
	{\endinnercustomgeneric}
}

\newcustomtheorem{customthm}{Theorem}
\newcustomtheorem{customcor}{Corollary}

\theoremstyle{remark}
\newtheorem{example}[theorem]{Example}

\newtheorem{remark}[theorem]{Remark}

\numberwithin{equation}{section}


\begin{document}
		
		\maketitle
		
		\begin{abstract}
			We introduce and study a notion of duality for two classes of optimization problems commonly occurring in probability theory.
			That is, on an abstract measurable space $(\Omega,\F)$, we consider pairs $(E,\G)$ where $E$ is an equivalence relation on $\Omega$ and $\G$ is a sub-$\sigma$-algebra of $\F$; we say that $(E,\G)$ satisfies ``strong duality'' if $E$ is $(\F\otimes\F)$-measurable and if for all probability measures $\P,\P'$ on $(\Omega,\F)$ we have
			\begin{equation*}
				\max_{A\in\G}\vert \P(A)-\P'(A)\vert = \min_{\tilde{\P}\in\Pi(\P,\P')}(1-\tilde{\P}(E)),
			\end{equation*}
			where $\Pi(\P,\P')$ denotes the space of couplings of $\P$ and $\P'$, and where ``max'' and ``min'' assert that the supremum and infimum are in fact achieved.
			The results herein give wide sufficient conditions for strong duality to hold, thereby extending a form of Kantorovich duality to a class of cost functions which are irregular from the point of view of topology but regular from the point of view of descriptive set theory.
			The given conditions recover or strengthen classical results, and they have novel consequences in stochastic calculus, point process theory, and random sequence simulation.
		\end{abstract}

\section{Introduction}

The objects of interest in this paper are two optimization problems commonly occurring in probability theory.
To state them, we consider two probability measures $\P$ and $\P'$ on an abstract measurable space $(\Omega,\F)$.
For an equivalence relation $E$ on $\Omega$ (which, when viewed as a subset of $\Omega\times\Omega$, is $(\F\otimes\F)$-measurable) we often aim to solve
\begin{equation}\label{eqn:primal}
	\begin{split}
		\text{minimize} & \qquad 1-\tilde{\P}(E) \\
		\text{over} &\qquad \text{all couplings } \tilde{\P} \text{ of } \P,\P',
	\end{split} \tag{EC}
\end{equation}
which we refer to as the \textit{equivalence coupling problem for $E$} or simply the \textit{$E$-coupling problem}.
Alternatively, for a sub-$\sigma$-algebra $\G$ of $\F$, we might aim to solve
\begin{equation}\label{eqn:dual}
	\begin{split}
		\text{maximize} & \qquad |\P(A) - \P'(A)| \\
		\text{over} &\qquad A\in \G,
	\end{split} \tag{TV}
\end{equation}
which we refer to as \textit{total variation problem for $\G$} or simply the \textit{$\G$-total variation problem}.
While one is typically interested in one of \eqref{eqn:primal} or \eqref{eqn:dual} for particular applications, our goal in this paper is to show that, in great generality, they are in fact equivalent.

\subsection{Review of Prior Work}\label{subsec:prior}

There already exist a few particular instances of strong duality which motivate the potential success of such a general theory; the most classical example will certainly be familiar to the reader.
It states that, if $\Omega$ is Polish space with $\mathcal{B}(\Omega)$ is its Borel $\sigma$-algebra, and if $\Delta = \{(\omega,\omega)\in \Omega\times \Omega: \omega\in \Omega \}$ denotes the diagonal in $\Omega\times\Omega$, we have
\begin{equation}\label{eqn:simple-duality}
	\max_{A\in\mathcal{B}(\Omega)}\vert \P(A)-\P'(A)\vert = \min_{\tilde{\P}\in\Pi(\P,\P')}(1-\tilde{\P}(\Delta))
\end{equation}
for all Borel probability measures $\P,\P'$ on $\Omega$, where $\Pi(\P,\P')$ denotes the space of all couplings of $\P$ and $\P'$.
This result has certainly been known for a long time, at least for countable sets $\Omega$, so its exact source is difficult to track down \cite[Chapter~I.7]{LindvallCoupling}.

A second known example concerns the space of binary sequences $\Omega := \{0,1\}^{\N}$ with the Borel $\sigma$-algebra of its product topology, and $E_0$ the equivalence relation of eventual equality.
(Actually, $\{0,1\}$ can be replaced with any Polish space.)
A coupling of two Borel probability measures $\P,\P'$ on $\Omega$ is called \textit{successful} if $E_0$ has probability one, that is, if the two random sequences are eventually equal almost surely.
It was shown in a series of works \cite{Pitman, Follmer, Thorisson, Griffeath}, culminating in \cite{Goldstein}, that the existence of a successful coupling of two Borel probability measures $\P,\P'$ on $\Omega$ is closely related to $\P$ and $\P'$ assigning the same probability to all elements of the tail $\sigma$-algebra $\tail$.
More precisely \cite[Theorem~2.1]{Goldstein}, one has
\begin{equation}\label{eqn:tail-duality}
	\max_{A\in \tail}|\P(A) - \P'(A)| = 0 \qquad \text{if and only if} \qquad \min_{\tilde{\P}\in\Pi(\P,\P')}(1-\tilde{\P}(E_0)) = 0,
\end{equation}
for all Borel probability measures $\P,\P'$ on $\Omega$.

A third example, from ergodic theory, shows that eventual equality and the tail $\sigma$-algebra in \eqref{eqn:tail-duality} can be replaced with the analogous objects for the notion of shift-invariance.
Adopting the notation of the preceding paragraph, write $\invar$ for the shift-invariant $\sigma$-algebra and $E_{S}$ for the equivalence relation of equality modulo shifts.
A coupling of two probability measures $\P,\P'$ on $\Omega$ is called a \textit{successful shift-coupling} if $E_S$ has probability one, that is, if the random sequences are almost surely equal modulo a random shift.
Then, it was shown \cite{AldousThorisson} that one has
\begin{equation}\label{eqn:shift-duality}
	\max_{A\in \invar}|\P(A) - \P'(A)| = 0 \qquad \text{if and only if} \qquad \min_{\tilde{\P}\in\Pi(\P,\P')}(1-\tilde{\P}(E_S)) = 0,
\end{equation}
for $\P,\P'$ any two Borel probability measures on $\Omega$.

Subsequent work also saw many generalizations of \eqref{eqn:shift-duality}.
For example, in \cite{Georgii} it is shown that a similar statement holds for measurable actions of very many groups and semigroups on Polish spaces.
There is also \cite{Shinko} which generalizes this fact to the setting of cardinal algebras.
We also have \cite[Theorem~1]{Khezeli} which establishes an analogous statement for unimodular random networks, where the shift operation is replaced by a root-change operation; this example is particularly interesting since the underlying measurable space is not standard Borel \cite[Proposition~1]{Khezeli}.

The most general formulation of an existing result in the spirit of strong duality that we are aware of is \cite[Theorem~1]{PratelliRigo} which proves, as a consequence of a much more general result on constrained versions of the Skorokhod representation theorem, the following:
If two Borel probability measures on a metric space $S$ agree on the sub-$\sigma$-algebra generated by a measurable function $g:S\to T$ for a sufficiently nice space $T$, then they can be coupled together so that their images under $g$ are almost surely equal.
This is a sort of strong duality statement for equivalence relations on $S$ that can be represented as pullbacks of $\Delta$ in $T$ by $g$; however, the regularity conditions required on $T$ mean this result does not recover strong duality for many examples of interest (for example, the pairs $(E_0,\tail)$ and $(E_{S},\invar)$ above).

\subsection{Relation to Kantorovich Duality}\label{subsec:Kantorovich}

The theory of Monge-Kantorovich optimal transport and Kantorovich duality can also be seen as an important precedent for our work; see \cite{OldAndNew,VillaniTopics} for the standard background, and \cite{RachevRdorf} for a more comprehensive treatment which is better suited to our setting.
Since \eqref{eqn:primal} is exactly a Monge-Kantorovich optimal transport problem with cost $c:=1-\ind_E$, it is natural to try to prove strong duality as a consequence of some sufficiently general theorem in the vast literature on Kantorovich duality \cite{OldAndNew,VillaniTopics,Kellerer,RamachandranRdorf,Rigo,Bieglbock1,Bieglbock2}.
In this subsection we address the limitations of such an approach.

First, we consider the case that $\Omega$ is Polish and $E$ is closed in $\Omega\times\Omega$.
Then the cost function $c=1-\ind_E$ is lower-semicontinuous, so a standard form of Kantorovich duality \cite[Theorem~5.10]{OldAndNew} gives
\begin{equation*}
	\max_{f,f'}\left(\int_{\Omega}f\,d\P+\int_{\Omega}f'\,d\P'\right)= \min_{\tilde{\P}\in\Pi(\P,\P')}(1-\tilde{\P}(E)),
\end{equation*}
where the right side is achieved and the left side is taken over, say, all bounded measurable $f,f':\Omega\to\R$ with $f\oplus f' \le 1-\ind_E$.
In fact, it is standard that one can equivalently take the maximum over function classes which are assumed to have further structure, hence massaging the left side in the following ways:
first, one can assume that $f$ is $\P$-integrable and $c$-convex;
second, one can assume that $f'$ is equal to $f^c$, the $c$-transform of $f$;
third, one can use that $c$ satisfies the triangle inequality (this follows from $E$ being transitive) to get that $f^c = -f$;
fourth, one can use that $c$ satisfies the bounds $0\le c\le 1$ to assume that $f$ satisfies the bounds $0\le f \le 1$;
fifth, one can use that $c$ is symmetric (this follows from $E$ being reflexive) to replace the parentheses with absolute values.
At this point some straightforward additional analysis shows that one can take $f$ to be of the form $\ind_A$ for $A$ ranging over a suitable sub-$\sigma$-algebra $\G$ of $\F$.
(Alternatively, one can begin by directly appling a form of Kantorovich duality which is specifically tailored to cost functions which only take on values in $\{0,1\}$, like \cite[Theorem~1.27]{VillaniTopics}.)
Thus, Kantorovich duality implies that closed equivalence relations satisfy our strong duality.

However, for the applications most of interest in probability theory, one needs to consider the case that $\Omega$ is Polish and $E$ is $F_{\sigma}$ in $\Omega\times\Omega$.
Since $c=1-\ind_E$ is no longer guaranteed to be lower semicontinuous, the standard form of Kantorovich duality does not apply.
Nonetheless, one has \cite[Corollary~2.3.9]{RachevRdorf} for all measurable costs, which guarantees
\begin{equation*}
	\sup_{f,f'}\left(\int_{\Omega}f\,d\P+\int_{\Omega}f'\,d\P'\right)= \inf_{\tilde{\P}\in\Pi(\P,\P')}(1-\tilde{\P}(E)),
\end{equation*}
where again the left side is taken over all bounded measurable $f,f':\Omega\to\R$ with $f\oplus f' \le 1-\ind_E$.
While this may give the impression that strong duality is within reach, there are two crucial ways in which we become stuck:
For one, there are not sufficient results in the literature which guarantee that the left side can be massaged, as in the preceding paragraph, into the form of a total variation norm; indeed, steps three, four, and five can be executed similarly, but steps one and two are hard to rigorosouly justify, due to the fact that classical approximation arguments break down when $c$ lacks the appropriate topological regularity.
For another, which we explain further in the next paragraph, the existence of primal minimizers is not guaranteed.
Thus, existing forms of Kantorovich duality do not appear to imply that $F_{\sigma}$ equivalence relations satisfy our strong duality for a suitable sub-$\sigma$-algebra $\G$ of $\F$.

In our opinion, the question of the existence of primal minimizers has received surprisingly little attention.
In the classical setting where $\Omega$ is a Polish space and the cost $c$ is lower semi-continuous, existence of course follows immediately from topological considerations.
Yet, many authors do not attempt to expand the scope of existence results; for instance, in \cite{Bieglbock2} it is stated: ``If $c$ fails to be lower semi-continuous, there is little reason why a primal optimizer should exist''.
Nonetheless, there are, as far as we know, exactly two known results outside the classical setting:
The first is \cite[Remark~2.12(a)]{RachevRdorf} in which it is shown that primal minimizers always exist if one works in the world of finitely-additive probability measures, but this is not useful to us since we are interested in the more standard setting of countably-additive probability measures.
The second is \cite[Theorem~2.3.10]{RachevRdorf} (originally proven in \cite{Kellerer}) in which it is shown that primal minimizers exist for costs which are limits of regular costs, with respect to a certain metric; unfortunately, this condition is still not general enough to cover our setting of interest.

For all these reasons, the state-of-the-art technology on Kantorovich duality does not appear to imply our notion of strong duality in a sufficiently general setting.
Nonetheless, our strong duality can certainly be seen as a form of Kantorovich duality, at least in a formal sense.
In this way, our positive results can be seen as rather surprising in that they establish a form of Kantorovich duality for highly irregular costs.
From the point of view of optimal transport, our results can be succinctly summarized as follows:
One can exchange topological regularity (the cost function is lower semi-continuous) for descriptive set-theoretic regularity (the cost function is supported on an equivalence relation) while maintaining a form of Kantorovich duality.

\subsection{Statement of Main Results}

Having established these precedents for duality between \eqref{eqn:primal} and \eqref{eqn:dual}, we now state the main results of the paper.
Throughout, let $(\Omega,\F)$ be an abstract measurable space.

Write $\mathcal{P}(\Omega,\F)$ for the set of probability measures on $(\Omega,\F)$.
For $\P,\P'\in \mathcal{P}(\Omega,\F)$, write $\Pi(\P,\P')$ for the space of all couplings of $\P,\P'$, that is, all probability measures on $(\Omega\times\Omega,\F\otimes\F)$ with marginals given by $\P$ and $\P'$, respectively.
Then, we consider pairs $(E,\G)$, where $E$ is an equivalence relation on $\Omega$ and where $\G$ is a sub-$\sigma$-algebra of $\F$.
We say that $E$ is \textit{measurable} if $E\in\F\otimes\F$ when viewed as a subset of $\Omega\times\Omega$, and we say that a pair $(E,\G)$ satisfies \textit{strong duality} if $E$ is measurable and if we have
\begin{equation}\label{eqn:strong-dual}
	\max_{A\in \G}|\P(A) - \P'(A)| = \min_{\tilde{\P}\in\Pi(\P,\P')}(1-\tilde{\P}(E))
\end{equation}
for all $\P,\P'\in\mathcal{P}(\Omega,\F)$; the appearence of ``max'' and ``min'' assert that the supremum and infimum are both achieved.

Our main results, as is the common practice in mathematical optimization, come in the form of sufficient conditions for strong duality to hold.
To state them, we need to introduce an important simplification. 
For an equivalence relation $E$ on $\Omega$, define the sub-$\sigma$-algebra 
\begin{equation*}
	E^{\ast} := \{A\in\F: \forall(\omega,\omega')\in E: (\omega\in A \Leftrightarrow \omega'\in A) \}
\end{equation*}
of $\F$, and, for sub-$\sigma$-algebra $\G$ of $\F$, define the equivalence relation
\begin{equation*}
	\G^{\ast} := \{(\omega,\omega')\in \Omega\times\Omega: \forall A\in \G:(\omega\in A \Leftrightarrow \omega'\in A) \}
\end{equation*}
on $\Omega$.
(An equivalent definition of $\G^{\ast}$ which has been extensively studied in classical works on the existence of regular conditional distributions \cite{BlackwellDubins,BertiRigo,HoffmanJorgensen} is $\G^{\ast} = \bigcup_{H \textrm{ is a $\G$-atom}}(H\times H)$, where by a \textit{$\G$-atom} we mean an intersection of all events in $\G$ containing a given point.)
It turns out (Lemma~\ref{lem:Galois-basics}) that $\ast$ is an antitone Galois correspondence which plays a central role in the notion of strong duality.

With the correspondence in hand, we can state our first main result:

\begin{customthm}{\ref{thm:basic-dual}}
	If $(\Omega,\F)$ is a standard Borel space and if a pair $(E,\G)$ satisfies $E\subseteq \G^{\ast},\G\subseteq E^{\ast}$, and $E\in \G\otimes \G$, then $(E,\G)$ satisfies strong duality.
\end{customthm}

While the conditions $E\subseteq \G^{\ast}$ and $\G\subseteq E^{\ast}$ are usually trivial to verify, the condition $E\in\G\otimes\G$ is more complicated.
In fact, while this condition appears to be rather general, there are important examples of pairs satisfying strong duality for which this result does not apply; notably, the tail equivalence relation and the tail $\sigma$-algebra $(E_0,\tail)$ given above.
This suggests that strong duality is a more general phenomenon, and this leads us to our second main result:

\begin{customthm}{\ref{thm:ctble-dual}}
	If $(\Omega,\F)$ is a standard Borel space and if equivalence relations $E_1\subseteq E_2\subseteq\cdots$ on $\Omega$ and sub-$\sigma$-algebras $\G_1\supseteq\G_2\supseteq \cdots$ of $\F$ are such that for each $n\in\N$ the pair $(E_n,\G_n)$ satisfies strong duality, then the pair $(\bigcup_{n\in\N}E_n,\bigcap_{n\in\N}\G_n)$ satisfies strong duality.
\end{customthm}

It is now useful to make a few remarks about these main theorems.
First, we note that Theorem~\ref{thm:basic-dual} and Theorem~\ref{thm:ctble-dual} are together powerful enough to establish strong dualizability for most ``reasonable'' pairs of equivalence relations and sub-$\sigma$-algebras appearing in probability theory.
To illustrate this, we investigate three major applications (in stochastic calculus, point process theory, and random sequence simulation) in Section~\ref{sec:applications}.
Additionally, we show (Proposition~\ref{prop:pseudo-strong-dual}) that many existing results in ergodic theory can be immediately upgraded into results about strong duality.
For the remaining remarks, let $(\Omega,\F)$ denote a standard Borel space, $(E,\G)$ a pair satisfying strong duality, and $\P,\P'\in\mathcal{P}(\Omega,\F)$ any probability measures.

Second, it is crucial to emphasize that the statements and proofs of our main results are purely measure-theoretic.
Even though expanding the generality to this setting forces one to leave behind many useful topological tools, our proofs reveal that only a few non-elementary results (notably, the existence of regular conditional probabilities, the Hahn-Jordan decomposition for finite signed measures, and some basic aspects of Hilbert spaces) are needed.
Keeping this generality in mind, it is highly non-trivial that optimizers can even be guaranteed to exist; since compactness arguments are not available, we must instead directly construct the putative optimizers and show that they have the desired properties.
Thus, it is exactly this measure-theoretic perspective that allows us to push beyond the reach of standard Kantorovich duality.

Third, we comment on the structure of solutions to \eqref{eqn:primal} and \eqref{eqn:dual}.
Indeed, for \eqref{eqn:primal}, the objective $\tilde{\P}\mapsto 1-\tilde{\P}(E)$ is affine and the feasible region $\Pi(\P,\P')$ is convex, so it follows that its solution set is convex and, by strong duality, non-empty;
in fact, we have an exact characterization of minimizers (Proposition~\ref{prop:opt-cond}), which largely parallels a classical characterization of primal minimizers for \eqref{eqn:simple-duality}.
For \eqref{eqn:dual}, it is known (Lemma~\ref{lem:dual-max}) that maximizers always exist, even without the assumption of strong duality; however, there does not appear to be a useful characterization of maximizers.
We also remark that, while our results give a very wide guarantee of the existence of solutions, concrete settings of interest will generally require further analysis in order to show that there exist solutions with desirable properties.

We now consider the perspective that one of \eqref{eqn:primal} and \eqref{eqn:dual} is a given \textit{primal} problem and that one seeks a suitable \textit{dual} problem of the opposite form.
As we outline in this last part, both perspectives are possible in great generality, although the situation is not completely symmetric.

On the one hand, we consider (Subsection~\ref{subsec:equiv}) the setting that an equivalence relation $E$ on $\Omega$ is given, and that one seeks a suitable sub-$\sigma$-algebra $\G$ of $\F$ such that $(E,\G)$ satisfies strong duality.
In this setting, it can be shown (Proposition~\ref{prop:strong-dual}) that $(E,E^{\ast})$ satisfies strong duality as soon as $(E,\G)$ is satisfies strong duality for some sub-$\sigma$-algebra $\G$ of $\F$.
Since $E^{\ast}$ is thus a canonical choice of $\G$, we can say that $E$ itself is \textit{strongly dualizable} whenever the pair $(E,E^{\ast})$ satisfies strong duality.
Let us also recall a standard term from descrptive set theory, that an equivalence relation $E$ on a standard Borel space $(\Omega,\F)$ is called \textit{smooth} if there exists a standard Borel space $(X,\mathcal{X})$ and a measurable map $\phi:(\Omega,\F)\to (X,\mathcal{X})$ such that for all $\omega,\omega'\in \Omega$ we have $(\omega,\omega')\in E$ if and only if $\phi(\omega) = \phi(\omega')$.
Then we have the following:

\begin{customcor}{\ref{cor:main-eq}}
	On a standard Borel space, any equivalence relation that can be written as a countable increasing union of smooth equivalence relations is strongly dualizable.
\end{customcor}

An equivalence relation that can be written as a countable increasing union of smooth equivalence relations is typically called \textit{hypersmooth} in descriptive set theory.
Indeed, many equivalence relations encountered in probability theory are hypersmooth.

On the other hand, we consider (Subsection~\ref{subsec:salg}) the setting that a sub-$\sigma$-algebra $\G$ of $\F$ is given, and that one seeks a suitable equivalence relation $E$ on $\Omega$ such that $(E,\G)$ satisfies strong duality.
As before we still say that $\G$ itself is \textit{strongly dualizable} whenever the pair $(\G^{\ast},\G)$ satisfies strong duality.
This setting is slightly more complicated than the previous, since $\G$ being strongly dualizable need not be equivalent to the existence of some $E$ such that $(E,\G)$ satisfies strong duality, and since there exist (Example~\ref{ex:asymmetry}) sub-$\sigma$-algebras that are not stongly dualizable.
Nonetheless, we have the following:

\begin{customcor}{\ref{cor:ctble-dual-salg}}
	On a standard Borel space, any sub-$\sigma$-algebra that can be written as a countable decreasing intersection of countably-generated sub-$\sigma$-algebras is strongly dualizable.
\end{customcor}

A sub-$\sigma$-algebra that can be written as a countable decreasing intersection of countably-generated sub-$\sigma$-algebras is sometimes called a \textit{tail} $\sigma$-algebra in probability theory \cite{DubinsHeath}.
(Note that we say ``a tail $\sigma$-algebra'' rather than ``the tail $\sigma$-algebra'', in order to distinguish the general case from the particular example of $\tail$.)
Indeed, many sub-$\sigma$-algebras encountered in probability theory are tail $\sigma$-algebras.

Finally, we note that, in addition to recovering some known results, our main theorems have novel consequences in a few probabilistic settings.
Most importantly, we obtain a characterization of the solution to the so-called ``Brownian germ coupling problem'' for one-dimensional diffusions (Theorem~\ref{thm:SDE-GCP}) which complements recent work in \cite{MEXIT,Vollering}.
We also obtain (Corollary~\ref{cor:DPP-sim-pot}) some partial results on couplings of point processes; these results are simple (and perhaps already known) in the setting of Poisson point processes, but our robust method of proof allows us to conclude the same about the more complicated setting of determinantal point processes.
Our work also yields (Corollary~\ref{cor:ress}) a non-constructive proof of the existence of some interesting randomized sorting algorithms, but many questions remain.

\subsection{Future Work}

We now outline a few open questions related to this work.

A main open question remaining at the end of this work concerns the generality of strong duality.
That is, can we find necessary and sufficient conditions for a pair $(E,\G)$ to satisfy strong duality?
While the sufficient conditions of this paper are already powerful enough to prove strong duality for most applications of interest to probabilists, we have two motivations for posing the question more generally.

The first motivation concerns the question of strong dualizability for equivalence relations.
In this case, we have shown that all hypersmooth equivalence relations are strongly dualizable, and also (Corollary~\ref{cor:CBER-dual}) that all countable Borel equivalence relations are strongly dualizable.
Additionally, we were unable, despite trying for quite some time, to come up with a single example of a measurable equivalence relation which is not strongly dualizable.
Thus, from the perspective of descriptive set theory, the following is natural:

\begin{conjecture}
	On a standard Borel space, all Borel equivalence relations are strongly dualizable.
\end{conjecture}

The second motivation concerns the question of strong dualizability for sub-$\sigma$-algebras.
Indeed, in the general theory of couplings, one has Thorisson's ``Working Hypothesis'' from  \cite{Thorisson} that ``Each meaningful distributional relation should have a coupling counterpart''.
The present results extend the existing work of \cite{PratelliRigo} in developing the theme that, if two probability measures agree on a given sub-$\sigma$-algebra, then they can be coupled to be almost surely equivalent under a suitable equivalence relation.
While there indeed exist sub-$\sigma$-algebras which are not strongly dualizable, we believe the present work shows that the term ``meaningful'' in fact excludes very few cases.

Another possible avenue for future work concerns a notion of duality which lies in between weak duality and strong duality; that is, that a pair $(E,\G)$ has the property that we have
\begin{equation*}
	\max_{A\in \G}|\P(A)-\P'(A)| = \inf_{\tilde{\P}\in \Pi(\P,\P')}(1-\tilde{\P}(E))
\end{equation*}
for all probability measures $\P,\P'\in \mathcal{P}(\Omega,\F)$.
(Recall that the supremum on the left is always achieved, by Lemma~\ref{lem:dual-max}.)
While this property is likely interesting in its own right, we did not pursue it in the present work, since most interesting applications to probability require the infimum to be realized.

Additionally, several open questions arise when one interprets the present results in the setting of optimal transport theory.
That is, we can consider slight modifications to the primal problem \eqref{eqn:primal} and ask whether there exists a suitable modification to the dual problem \eqref{eqn:dual} such that one maintains strong duality.
For one, we can fix a family of, say, bounded measurable functions $\{f_i\}_{i\in I}$ from $\Omega\times\Omega$ to $\R$ and consider the problem
\begin{equation*}
	\begin{split}
		\text{minimize} & \qquad 1-\tilde{\P}(E) \\
		\text{over} &\qquad \text{all couplings } \tilde{\P} \text{ of } \P,\P' \\
		\text{subject to } &\qquad \int_{\Omega\times\Omega}f_i\, d\tilde{\P} = 0 \text{ for all } i\in I.
	\end{split}
\end{equation*}
This constraint encodes, for example, the so-called problems of \textit{martingale optimal transport} \cite{MOT1,MOT2} and \textit{optimal transport under symmetry} \cite{Lopes}.
(See \cite{Zaev}.)
Another possibility is to consider, for a parameter $\varepsilon>0$, the regularized problem
\begin{equation*}
	\begin{split}
		\text{minimize} & \qquad 1-\tilde{\P}(E) + \varepsilon H(\tilde{\P}\, | \, \P\otimes \P') \\
		\text{over} &\qquad \text{all couplings } \tilde{\P} \text{ of } \P,\P' \\
	\end{split}
\end{equation*}
where $H(\tilde{\P}\, | \, \P\otimes \P')$ denotes the relative entropy.
This is the analog of the so-called problem of \textit{entropically-regularized optimal transport} which has been extensively studied in many recent works \cite{EntropicOT1,EntropicOT2}.

Lastly, we pose the question of whether one can use Kantorovich duality alone to prove strong duality for a sufficiently rich class of examples.
As we have seen in Subsection~\ref{subsec:Kantorovich}, this is not straightforward from existing literature, but it still seems possible to sharpen various approximation arguments in order to conclude.
Interestingly, it appears that arguments based on Kantorovich duality will lead to slightly different results than the arguments herein.
For example, we believe that some clever applications of Kantorovich duality should allow one to prove:

\begin{conjecture}\label{conj:f-sigma}
	On a Polish space, all $F_{\sigma}$ equivalence relations are strongly dualizable.
\end{conjecture}

Strictly speaking, this result is not implied by Corollary~\ref{cor:main-eq}, since there exist equivalence relations which can be written as a countable increasing union of closed sets but not as an countable increasing union of closed equivalence relations
(for instance, the shift equivalence relation $E_S$, and also many orbit equivalence relations).
Yet, for most applications, both Corollary~\ref{cor:main-eq} and Conjecture~\ref{conj:f-sigma} apply.

\section{Applications}\label{sec:applications}

In this section we apply our main results to three different areas of probability theory, and we hope that our notion of strong duality and our sufficient conditions will inspire similar work in the future.
Throughout this section, if $\Omega$ is a Polish space, we write $\mathcal{P}(\Omega)$ for the space of Borel probability measures on $\Omega$.

\subsection{Stochastic Calculus}

Our first application concerns stochastic calculus, so for this subsection we assume that the reader has familiarity with the basic theory, as outlined in, say, \cite{RevuzYor}.
For the setting, let us write $\Omega:=C_0([0,\infty),\R)$ for the usual Wiener space of continuous real-valued functions vanishing at zero, endowed with the topology of uniform convergence on compact sets.
Also write $\W$ for the Wiener measure on $\Omega$, that is, the law of a standard Brownian motion.

A common question in the theory of stochastic processes is that of determining when two stochastic processes behave similarly at small time scales.
For example, it is known that any process which is locally absolutely continuous with respect to Brownian motion must have the local limit of a Brownian motion \cite[Lemma~4.3]{DauvergneSarkarVirag} and that ``the compound Poisson component [of the L\'evy-It\^o decomposition] does not contribute to the initial sample path behaviour of a L\'evy process'' \cite[p. 15]{Bertoin}.
Another well-studied notion of small-time similarity (which is especially important from the point of view of mathematical finance) is that of separating times \cite{SepTimeI,SepTimeII}: For each pair of stochastic processes started at the same point, there exists a unique stopping time such that their laws are mutually absolutely continuous before this time and mutually singular after it; in fact, exact forms of the separating times are known for pairs of processes such as Bessel processes, L\'evy processes, and general diffusions.

Our object of interest in this application is a somewhat different notion of small-time similarity.
For the sake of simplicity, we focus on the case of comparing an arbitrary continuous process to a Brownian motion, in the following sense:

\begin{definition}
	A probability measure $\P\in \mathcal{P}(\Omega)$ is said to have the \emph{Brownian germ coupling property (Brownian GCP)} if one can construct a probability space $(\tilde{\Omega},\tilde{\F},\tilde{\P})$ on which are defined
	\begin{itemize}
		\item[(i)] a standard Brownian motion $B= \{B_t\}_{t\ge 0}$,
		\item[(ii)] a stochastic process $X = \{X_t\}_{t\ge 0}$ with law $\P$, and
		\item[(iii)] a random time $T$ with $\tilde{\P}(T>0) = 1$,
	\end{itemize}
	such that $\tilde{\P}(X_t = B_t \text{ for all } 0 \le t \le T) = 1$.
	A stochastic process is said to have the Brownian GCP if its law has the Brownian GCP.
\end{definition}

In words, a stochastic process with the Brownian GCP is a process which can be coupled to almost surely travel alongside a Brownian motion for some positive amount of time.
This notion of local Brownianity appears rather strong, and, apart from trivialities, it is not at all obvious how to construct stochastic processes satisfying this property.
Thus, we were greatly intrigued by the results of \cite{MEXIT}, which show that all Brownian motions with drift satisfy the Brownian GCP.
This suggests that the class of stochastic processes satisfying the Brownian GCP may actually be quite rich; our present goal is to understand exactly how rich it is.

Our major contribution is to show that the duality theory presents a robust approach to this problem.
Towards this end, if we write $\omega = \{\omega_t\}_{t\ge 0}$ for the canonical coordinate process in $\Omega$ and if we define the usual \textit{germ $\sigma$-algebra} via
\begin{equation*}
	\F_{0+} := \bigcap_{t > 0}\sigma(\omega_s: 0\le s \le t),
\end{equation*}
then we are led to the following fundamental result:

\begin{proposition}\label{prop:BGCP-characterization}
	A probability measure $\P\in \mathcal{P}(\Omega)$ has the Brownian GCP if and only if we have $\P(A) = \W(A)$ for all $A\in \F_{0+}$.
\end{proposition}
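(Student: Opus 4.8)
The plan is to realize the germ $\sigma$-algebra $\F_{0+}$ as $E^{\ast}$ for a suitable strongly dualizable equivalence relation, and then to translate the two sides of the resulting strong duality into the two sides of the claimed equivalence. So I would first introduce the equivalence relation $E_{0+}$ on $\Omega$ of \emph{germ equality at zero}: declare $\omega \sim \omega'$ whenever $\omega_s = \omega'_s$ for all $0 \le s \le \varepsilon$ for some $\varepsilon > 0$. Writing $E_n := \{(\omega,\omega') \in \Omega\times\Omega : \sup_{0 \le s \le 1/n}|\omega_s - \omega'_s| = 0\}$, each $E_n$ is an equivalence relation that is closed in $\Omega\times\Omega$ (the displayed supremum is continuous for the topology of uniform convergence on compacts), and one has $E_n \subseteq E_{n+1}$ with $E_{0+} = \bigcup_n E_n$. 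By Lemma~\ref{lem:closed-basic} each $E_n$ is basic, hence strongly dualizable by Theorem~\ref{thm:basic-dual}, so $E_{0+}$ is strongly dualizable by Theorem~\ref{thm:ctble-dual}.

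Next I would verify that $E_{0+}^{\ast} = \F_{0+}$. The inclusion $\F_{0+} \subseteq E_{0+}^{\ast}$ is immediate: if $A \in \F_{0+}$ and $(\omega,\omega') \in E_{0+}$ agree on $[0,\varepsilon]$, then $A \in \sigma(\omega_s : 0 \le s \le \varepsilon)$ forces $\ind_A(\omega) = \ind_A(\omega')$. For the reverse inclusion, fix $t > 0$ and let $r_t \colon \Omega \to C_0([0,t],\R)$ denote the restriction map, which is continuous, surjective, and admits the continuous section $s_t$ extending a path on $[0,t]$ by its constant value at $t$. Since $r_t(\omega) = r_t(\omega')$ implies $(\omega,\omega') \in E_{0+}$, any $E_{0+}$-invariant Borel set $A$ satisfies $A = r_t^{-1}(s_t^{-1}(A))$, which lies in $\sigma(r_t) = \sigma(\omega_s : 0 \le s \le t)$; intersecting over $t > 0$ yields $E_{0+}^{\ast} \subseteq \F_{0+}$.

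I would then record that a coupling $\tilde{\P} \in \Gamma(\P,\W)$ with $\tilde{\P}(E_{0+}) = 1$ exists if and only if $\P$ has the Brownian GCP. Given such a coupling, take $(\Omega\times\Omega, \mathcal{B}(\Omega)\otimes\mathcal{B}(\Omega), \tilde{\P})$ with $X$ and $B$ the two coordinate processes and $T(\omega,\omega') := \sup\{t \ge 0 : \omega,\omega' \text{ agree on } [0,t]\}$: continuity of paths makes each $\{T \ge r\}$ closed (hence $T$ measurable), makes the supremum an attained maximum so that $X$ and $B$ agree on $[0,T]$, and the hypothesis gives $T > 0$ almost surely, so $(B, X, T)$ witnesses the Brownian GCP. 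Conversely, given witnesses $(B, X, T)$ on some $(\tilde\Omega, \tilde\F, \tilde\P)$, the path map $\Phi := (X, B) \colon \tilde\Omega \to \Omega\times\Omega$ is measurable, $\Phi_{\ast}\tilde\P \in \Gamma(\P,\W)$, and $\Phi_{\ast}\tilde\P(E_{0+}) \ge \tilde\P(X_t = B_t \text{ for } 0 \le t \le T,\ T > 0) = 1$. Applying strong dualizability of $E_{0+}$ to the pair $(\P,\W)$ and using $E_{0+}^{\ast} = \F_{0+}$ then yields
\[
\sup_{A \in \F_{0+}}|\P(A) - \W(A)| \;=\; \min_{\tilde\P \in \Gamma(\P,\W)}\bigl(1 - \tilde\P(E_{0+})\bigr),
\]
and the key point is that the right-hand side is a genuine minimum: it vanishes precisely when some coupling is concentrated on $E_{0+}$, equivalently precisely when $\P$ has the Brownian GCP, while the left-hand side vanishes precisely when $\P(A) = \W(A)$ for all $A \in \F_{0+}$. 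This proves the proposition.

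I expect the main obstacle to be the identification $E_{0+}^{\ast} = \F_{0+}$, specifically the inclusion $E_{0+}^{\ast} \subseteq \F_{0+}$: knowing that a Borel set is a union of germ-equivalence classes does not by itself make it measurable with respect to $\sigma(\omega_s : 0 \le s \le t)$, and closing this gap genuinely requires the section $s_t$. The remaining ingredients — the closed, increasing-union structure feeding Theorems~\ref{thm:basic-dual} and~\ref{thm:ctble-dual}, and the back-and-forth between couplings on $\Omega\times\Omega$ and Brownian GCP witnesses — are routine once that identification and the existence half of strong duality are in hand.
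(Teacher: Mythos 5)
Your proof is correct and follows the same strategy as the paper: realize $\F_{0+}$ as $E_{0+}^{\ast}$ for a countable increasing union of closed equivalence relations, invoke Theorems~\ref{thm:basic-dual} and~\ref{thm:ctble-dual} together with Lemma~\ref{lem:closed-basic} to get strong duality for $(E_{0+},\F_{0+})$, and translate the two sides of that duality into the two halves of the claimed equivalence. You supply a couple of details the paper leaves implicit, namely the restriction-and-section argument establishing $E_{0+}^{\ast}\subseteq\F_{0+}$ and the observation that $\{T\ge r\}=E_r$ is closed so $T$ is measurable, but these are natural completions of the paper's sketch rather than a genuinely different route.
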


\begin{proof}
	It can be easily checked that the equivalence relation
	\begin{equation*}
		E_{0+} := \bigcup_{t>0}\{(\omega,\omega')\in \Omega\times \Omega:  \{\omega_s\}_{0\le s\le t} = \{\omega_s'\}_{0\le s\le t}\}
	\end{equation*}
	is hypersmooth and satisfies $E_{0+}^{\ast} = \F_{0+}$, hence by Corollary~\ref{cor:main-eq} that $(E_{0+},\F_{0+})$ satisfies strong duality.
	Thus, we claim that a probability measure $\P\in \mathcal{P}(\Omega)$ has the Brownian GCP if and only if there exists some coupling $\tilde{\P}\in \Pi(\W,\P)$ with $\tilde{\P}(E_{0+})=1$.
	For one direction, observe that, if $\P$ satisfies the GCP, then the joint law $\tilde{\P}$ of $(B,X)$ is exactly a coupling $\tilde{\P}\in \Pi(\W,\P)$ with $\tilde{\P}(E_{0+})=1$.
	For the other direction, suppose that $\tilde{\P}\in \Pi(\W,\P)$ is some coupling with $\tilde{\P}(E_{0+})=1$.
	We claim that the desired space $(\tilde{\Omega},\tilde{\F},\tilde{\P})$ is given by $\tilde{\Omega}:= \Omega\times\Omega$, and $\tilde{\F} := \mathcal{B}(\Omega)\otimes\mathcal{B}(\Omega)$, and $\tilde{\P}$ as given.
	Indeed, one can define $T:=\sup\{t \ge 0: \omega_s = \omega_s' \textrm{ for all } 0 \le s\le t \}$, which is clearly a measurable functions of the pair $(\omega,\omega')$.
	Thus, the result holds by the definition of strong duality.
\end{proof}

On the one hand, this provides an authoritative answer to the question of which stochastic processes have the Brownian GCP.
On the other hand, it remains to show that this equivalent condition is easy to verify in some concrete cases.
In fact, the value of Proposition~\ref{prop:BGCP-characterization} is that the classical tools of stochastic calculus have very much to say about the germ $\sigma$-algebra $\F_{0+}$, and thus many soft arguments suddently become available to us.

As an illustration of this, we have a simple and comprehensive characterization of the Brownian GCP for the case of time-homogeneous one-dimensional diffusions within the realm of classical conditions ensuring the existence of strong solutions to a given SDE \cite[Chapter~IX, Theorem~2.1]{RevuzYor}.
Since the Brownian GCP is spatially translation-invariant and temporally local, we lose no generality in restricting our attention to diffusions started at the origin and viewed on a finite time interval.

\begin{theorem}\label{thm:SDE-GCP}
	If $\mu:\R\to\R$ and $\sigma:\R\to[0,\infty)$ are Lipschitz continuous, then the strong solution $X = \{X\}_{0 \le t\le 1}$ of the SDE
	\begin{equation}\label{eqn:SDE}
		\begin{cases}
			dX_t = \mu(X_t)dt + \sigma(X_t)dB_t \textrm{ for } 0 \le t \le 1\\
			X_0 = 0,
		\end{cases}
	\end{equation}
	has the Brownian GCP if and only if $\sigma\equiv 1$ on some neighborhood of $0$.
\end{theorem}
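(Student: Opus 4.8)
The engine of the proof is Proposition~\ref{prop:BGCP-characterization}: writing $\P_X$ for the law of the strong solution $X$, it suffices to show that $\P_X$ and $\W$ agree on the germ $\sigma$-algebra $\F_{0+}$ if and only if $\sigma\equiv 1$ on a neighborhood of $x_0$. Translating the SDE by $x_0$ — which replaces $\mu,\sigma$ by the Lipschitz functions $x\mapsto\mu(x+x_0)$ and $x\mapsto\sigma(x+x_0)$ and alters neither side of the claimed equivalence — we may assume $x_0=0$. We will use two facts repeatedly. First, a localization principle: if $Y,Z$ are driven by the same Brownian motion and solve the same SDE on an open interval $I\ni 0$, then pathwise uniqueness forces $Y=Z$ up to the exit time $\tau$ of $I$, and since $\tau>0$ almost surely while every $A\in\F_{0+}$ lies in $\sigma(\omega_s:0\le s\le t)$ for all $t>0$, it follows that $Y$ and $Z$ have the same law on $\F_{0+}$ (hence, by Proposition~\ref{prop:BGCP-characterization}, the same Brownian GCP status). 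Second, $X$ is a time-homogeneous diffusion, so Blumenthal's zero-one law makes $\F_{0+}$ trivial under $\P_X$, as it is under $\W$.

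\emph{Sufficiency.} Suppose $\sigma\equiv 1$ on $(-\epsilon,\epsilon)$. By the localization principle we may replace $\mu$ outside $(-\epsilon,\epsilon)$ and so assume $\mu$ is bounded. Let $X'$ solve $dX'_t=\mu(X'_t)\,dt+dB_t$ with $X'_0=0$; since $\mu$ is bounded, Novikov's condition holds, so Girsanov's theorem shows that $\mathrm{Law}(X')$ and $\W$ are mutually absolutely continuous on $\sigma(\omega_s:0\le s\le 1)$, and hence — both being trivial on $\F_{0+}$ — they coincide on $\F_{0+}$. Since $X$ and $X'$ solve the same SDE on $(-\epsilon,\epsilon)$ (there $\sigma\equiv 1$), the localization principle gives $\P_X=\mathrm{Law}(X')=\W$ on $\F_{0+}$, so $X$ has the Brownian GCP.

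\emph{Necessity.} We argue by contraposition: suppose $\sigma$ is not identically $1$ on any neighborhood of $0$, so that (using $\sigma\ge 0$) the set $U:=\{x:\sigma(x)^2\ne 1\}$ is open and meets every neighborhood of $0$. If $X$ had the Brownian GCP, there would be a probability space carrying a standard Brownian motion $B$, a process $X$ of law $\P_X$, and an almost surely positive random time $T$ with $X_t=B_t$ for $0\le t\le T$. The pathwise quadratic variation $[\omega]_t$ is a measurable functional of $\omega|_{[0,t]}$ whose value is, almost surely, prescribed by the law of the process; thus almost surely $[X]_t=\int_0^t\sigma(X_s)^2\,ds$ and $[B]_t=t$, and equality of paths on $[0,T]$ forces $\int_0^t\sigma(B_s)^2\,ds=t$ for all $t\le T$. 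Differentiating the continuous integrand gives $\sigma(B_t)^2=1$ for every $t\in[0,T]$. But on the almost sure event $\{T\ge 1/n\}$, Brownian oscillation at $0$ ensures $\min_{0\le s\le 1/n}B_s<0<\max_{0\le s\le 1/n}B_s$, so the image $\{B_s:0\le s\le 1/n\}$ is a neighborhood of $0$; since $U$ is open and meets every neighborhood of $0$, there is $t\le 1/n\le T$ with $B_t\in U$, contradicting the previous sentence. Hence $\{T>0\}$ is null, the desired contradiction. (Equivalently, in the language of Proposition~\ref{prop:BGCP-characterization}: the $\F_{0+}$-event that $[\omega]_t=t$ for all $t$ in some interval $[0,\epsilon]$ has $\W$-probability $1$ but, by the occupation times formula and positivity of the local time of $X$ on the interior of its range, $\P_X$-probability $0$.)

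The substantive difficulty is the necessity argument in the borderline case $\sigma(0)^2=1$ with $\sigma\not\equiv 1$ nearby: the quadratic variation of $X$ then matches that of Brownian motion to first order, and the discrepancy is visible only because $X$, being non-degenerate at $0$, is forced to explore a full neighborhood of $0$ and thus to enter the region $U$ where $\sigma^2\ne 1$; this is where the path-locality of quadratic variation, combined with the intermediate value theorem (or, in the alternative phrasing, the occupation times formula), does the real work. By comparison, the sufficiency half is a routine application of Girsanov's theorem and Blumenthal's zero-one law, and the localization reductions are immediate consequences of pathwise uniqueness.
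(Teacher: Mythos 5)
Your proof is correct, and it takes a route that is genuinely different from the paper's in both directions, trading some machinery for some elementary reasoning.

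For the \emph{sufficiency} direction, the paper keeps the drift $\mu$ as given, establishes finite exponential moments for the exit time of $X$ from a compact interval $[a,b]\subseteq U$ by comparison with a drifted Brownian motion, and then invokes a stopping-time version of Novikov's condition and Girsanov's theorem to relate $X$ to the driftless diffusion $X'$ with the same $\sigma$ on the stopped $\sigma$-algebra $\F_{\tau_{a,b}}$. You instead first invoke localization (pathwise uniqueness on the interval where the coefficients agree) to replace $\mu$ by a bounded Lipschitz modification, which lets you apply the ordinary global Novikov condition and compare directly to a Brownian motion with bounded drift; this avoids the stopping-time machinery entirely. One should note explicitly that the truncated drift can be kept Lipschitz so that strong existence and pathwise uniqueness survive, but this is routine.

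For the \emph{necessity} direction, the difference is more substantive. The paper argues in two stages about $X$: it first extracts $\sigma(x_0)=1$ from the quadratic-variation argument, and then must separately show that $X$ itself oscillates around $x_0$, which it does via the small-time Gaussian approximation for diffusions with Lipschitz coefficients together with Blumenthal's zero-one law applied to the strong Markov process $X$. You sidestep all of this by working with the Brownian half of the coupling: on the event $\{X_t=B_t,\ 0\le t\le T\}$, the quadratic variation of $B$ forces $\sigma^2\equiv 1$ along the Brownian trajectory, and the oscillation of $B$ at $0$ (which is automatic, no diffusion asymptotics required) ensures the trajectory covers a full neighborhood of $0$. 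This is shorter, avoids the Gaussian small-time estimate, and makes the dependence on the hypotheses cleaner since Lipschitzness is then only used for well-posedness of the SDE.

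Two places in your write-up could be tightened. First, the phrase ``on the almost sure event $\{T\ge 1/n\}$'' is off: $\{T\ge 1/n\}$ is generally not almost sure; what you mean is that $\{T>0\}=\bigcup_n\{T\ge 1/n\}$ is almost sure, and the contradiction forces each $\{T\ge 1/n\}$ to be null. Second, the appeal to ``pathwise quadratic variation as a measurable functional'' is a bit delicate as stated, since the a.e.\ convergence of Riemann sums along dyadic partitions is not a single universal statement for all continuous semimartingales; the cleaner formulation is simply that quadratic variation is a local property of continuous semimartingales, so $[X]=[B]$ on the stochastic interval $[0,T]$ where $X=B$. With those cosmetic repairs the argument is complete and, in my view, a cleaner proof than the one in the paper.
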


\begin{proof}
	For one direction, let $(\tilde{\Omega},\tilde{\F},\tilde{\P})$ be a probability space witnessing the fact that $X$ satisfies the Brownian GCP.
	Then consider the event
	\begin{equation*}
		A:= \bigcup_{n\in\N}\{\omega\in \Omega: \langle\omega,\omega\rangle_{t} = t \textrm{ for all } [0,2^{-n}] \}
	\end{equation*}
	where $\langle\omega,\omega\rangle$ denotes the It\^{o} quadratic variation of $\omega$.
	Since $A\in\F_{0+}$ and $\tilde{\P}(B\in A) = 1$, Proposition~\ref{prop:BGCP-characterization} implies that we must have $\tilde{\P}(X\in A) = 1$.
	Now note that the quadratic variation of $X$ is given by $\langle X,X\rangle_t = \int_{0}^{t}(\sigma(X_s))^2\, ds$, so we conclude that there almost surely exists some $N\in\N$ with $\sigma(X_s) = 1$ for all $0\le s\le 2^{-N}$.
	In particular, we conclude $\sigma(0) =1$.
	
	Next we define the random times $\tau^{-}:=\inf\{t > 0: X_t < 0\}$ and $\tau^{+}:=\inf\{t > 0: X_t > 0\}$ on $(\tilde{\Omega},\tilde{\F},\tilde{\P})$, which are stopping times with respect to the natural filtration of $X$.
	We recall that the standard small-time approximation for diffusions with Lipschitz coefficients guarantees that $X_t/\sqrt{t}$ converges in distribution as $t\to 0$ to a Gaussian random variable with mean 0 and variance $\sigma(0) = 1$, hence
	\begin{align*}
		\tilde{\P}(\tau^{-} = 0) &= 
		\lim_{t\to 0}\tilde{\P}(X_s < 0 \text{ for some }0 \le s \le t) \\
		&\ge \liminf_{t\to\infty}\tilde{\P}(X_t < 0) \\
		&= \liminf_{t\to\infty}\tilde{\P}\left(\frac{X_t}{\sqrt{t}} < 0\right) = \frac{1}{2} > 0.
	\end{align*}
	By Blumenthal's zero-one law applied to the strong Markov process $X$, we see that $\tilde{\P}(\tau^{-} = 0) > 0$ implies $\tilde{\P}(\tau^{-} = 0) = 1$.
	The same argument applies to show  $\tilde{\P}(\tau^{+} = 0) = 1$.
	
	Finally, we put all the pieces together.
	By the almost sure continuity of $X$ and the fact that $\tilde{\P}(\tau^{-} = 0) = \tilde{\P}(\tau^{+} = 0) = 1$, we conclude that $\{X_s: 0 \le s \le T\}$ contains an open neighborhood of $0$ for any random variable $T$ which is almost surely strictly positive.
	Since we already know that there exists an $\N$-valued random variable $N$ with $\sigma(X_s) = 1$ for all $0\le s\le 2^{-N}$, we conclude that we have $\sigma \equiv 1$ on some neighborhood of $0$.
	
	For the other direction, suppose that $\sigma \equiv 1$ on some neighborhood $U$ of $0$.
	Then get reals $a,b\in\R$ with $a<b$ and $0\in (a,b)\subseteq [a,b]\subseteq U$.
	Since \eqref{eqn:SDE} admits strong solutions, we can construct $X=\{X_t\}_{t\ge0}$ on the probability space $(\Omega,\mathcal{B}(\Omega),\W)$ with its natural filtration $\{\F_t\}_{t\ge 0}$ defined via $\F_t := \sigma(\omega_s: 0 \le s\le t)$ for $t \ge 0$.
	We write $\E$ for the expectation on this space.
	
	To begin, we claim that the exit time $\tau_{a,b}^X:=\inf\{t > 0: X_t\notin [a,b]\}$ has finite exponential moments of all orders.
	To see this, define $m:= \max_{-a\le x\le b}\mu(x)$, which is finite by the continuity of $\mu$ on $[a,b]$.
	Since we have $\mu(X_t)\le m$ for all $0\le t\le \tau_{a,b}^X$ almost surely, it follows that one can construct $B^{m} = \{B_{t}^{m}\}_{t\ge0}$ a Brownian motion with drift $m$ on the same probability space in such a way that we have $X_t \le B_{t}^{m}$ for all $0\le t\le \tau_a^X$ almost surely.
	Then define the stopping time $\tau_a^{B^m} :=\inf\{t > 0: B_t^{m}\le a\}$, and note that we have $\tau_{a,b}^X\le \tau_a^{B^m}$ almost surely.
	It is known that $\tau_a^{B^m}$ has finite exponential moments of all orders, so the same must be true of $\tau_{a,b}^X$.
	
	In particular, we have shown
	\begin{equation*}
		\E\left[\exp\left(\frac{1}{2}\int_{0}^{\tau_{a,b}^X}(\mu(X_s))^2\,ds\right)\right] \le \E\left[\exp\left(\frac{m^2}{2}\tau_{a,b}^{X}\right)\right] <\infty.
	\end{equation*}
	This stopping-time version of Novikov's condition implies a stopping-time version of Girsanov's theorem which states that the law of $X$ is mutually absolutely continuous with the law of $X' = \{X_t'\}_{t\ge 0}$ the strong solution to the SDE
	\begin{equation*}
		\begin{cases}
			dX_t' = \sigma(X_t')dB_t \textrm{ for } 0 \le t \le 1\\
			X_0' = 0,
		\end{cases}
	\end{equation*}
	when restricted to the stopped $\sigma$-algebra $\F_{\tau_{a,b}}\subseteq \mathcal{B}(\Omega)$, for $\tau_{a,b}:=\inf\{t > 0: \omega_t \notin[a,b]\}$.
	Importantly, note $\F_{0+}\subseteq \F_{\tau_{a,b}}$ by the continuity of sample paths.
	Since Blumenthal's zero-one law implies that all events in $\F_{0+}$ have probability in $\{0,1\}$ under the laws $\W(X\in \cdot)$ and $\W(X'\in \cdot)$, we conclude $\W(X\in A)=\W(X'\in A)$ for all $A\in\F_{0+}$.
	
	Finally, observe that $\sigma\equiv 1$ on $U$ implies that $X'$ is a standard Brownian motion up to the stopping time $\tau_U := \inf\{t > 0: X'\notin U \}$.
	Since $\tau_U > 0$ by the continuity of sample paths, this means $X'$ has the Brownian GCP.
	Now by Proposition~\ref{prop:BGCP-characterization} we get $\W(X'\in A)=\W(A)$ for all $A\in\F_{0+}$, and, consequently, $\W(X\in A)=\W(A)$ for all $A\in\F_{0+}$.
	Therefore, one last application of Proposition~\ref{prop:BGCP-characterization} implies that $X$ has the Brownian GCP.
\end{proof}

Observe that Theorem~\ref{thm:SDE-GCP} recovers the result of \cite{MEXIT}, since Brownian motion with drift is certainly included as a special case.
However, the generality comes at a cost, since our result is essentially non-constructive.
In contrast, the work in \cite{MEXIT} is satisfyingly concrete, being based on William's excursion theory for Brownian motions with drift.
Thus, we would be greatly interested, for certain diffusions, in finding concrete couplings alongside a Brownian motion such that they agree for some positive random initial amount of time.
For instance, is this possible for a standard Brownian motion and a standard Ornstein-Uhlenbeck process?

\subsection{Point Process Theory}

Our second application concerns point processes, for which \cite{KallenbergMeas} is the standard reference.
To set things up, fix $k\in\N$ with $k\ge 1$, and let $\Omega$ denote the space of non-negative, locally finite, integer-valued, simple Borel measures on $\R^k$.
It is known \cite[Proposition~1.11]{Preston} that $\Omega$ is a standard Borel space when endowed with the Borel $\sigma$-algebra of the weak topology.
(In fact, $\R^k$ can be replaced with any locally compact Polish space).
An element of $\Omega$ is often called a \textit{configuration}, and it can equivalently be regarded random locally finite countable subset of $\R^k$.
Each $\P\in \mathcal{P}(\Omega)$ is called (the law of) a \textit{point process}.

The Poisson point processes are of course the simplest family of point processes, although recent attention has mostly turned to point processes exhibiting strong spatial correlations.
Recent work has significantly improved understanding of such aspects as Palm conditioning \cite{GhoshPalm,BufetovOlshanski,OsadaShirai}, number rigidity \cite{GhoshPeres,GhoshKrishnapur}, connections to infinite-dimensional stochastic differential equations \cite{Osada,HondaOsada}, and more.
Much of this progress concerns the so-called determinantal point processes (including those arising from the Gamma, Airy, Bessel, and Bergman kernels) for which many analytic and algebraic properties form essential tools; however, there have also been significant advances in the more general setting, notably including the point process of zeros of the standard planar Gaussian analytic function.

The statements of our results require some notation and terminology.
First, a finite collection of distinct points $\mathbf{x}\in(\R^k)^{\ell}$ will be called \textit{point set} and $\ell\in\N$ will be called its \textit{length}, denoted $|\mathbf{x}|$; here we use the convention that $\emptyset$ is the unique point set with $|\emptyset| = 0$.
Next, for a point process $\P\in\mathcal{P}(\Omega)$ and a point set $\mathbf{x} = (x_1,\ldots,x_{\ell}) \in(\R^k)^{\ell}$, we write $\P_{\mathbf{x}}$ for the \textit{reduced Palm measure} of $\P$ under $\mathbf{x}$; roughly speaking, the point process $\P_{\mathbf{x}}$ is the result of first conditioning $\P$ to contain the points $x_1,\ldots, x_{\ell}\in\R^k$ and then deleting them from the resulting configuration.
Then we introduce the following:

\begin{definition}
	With respect to a point process $\P\in \mathcal{P}(\Omega)$, two points sets $\mathbf{x}$ and $\mathbf{x}'$ are said to have \emph{similar potential} if there exists a coupling $\omega_{\mathbf{x}}$ and $\omega_{\mathbf{x'}}$ of $\P_{\mathbf{x}}$ and $\P_{\mathbf{x}'}$, respectively, on a probability space $(\tilde{\Omega},\tilde{\F},\tilde{\P})$ such that we have $\tilde{\P}(\|\omega_{\mathbf{x}}-\omega_{\mathbf{x}'}\|_{\TV} < \infty) = 1$.
\end{definition}

Let us now provide some interpetation for this notion and this terminology.
Indeed, consider taking a sample $\omega$ from $\P$ and transforming it into a sample $\omega_{\mathbf{x}}$ from $\P_{\mathbf{x}}$.
One of course has to modify some subset of the points of $\omega$ to achieve this, but the amount of total modification necessary depends on $\mathbf{x}$;
if it is extremely rare that $\mathbf{x}$ appears in $\omega$ then many points will need to be moved, and if it is extremely common that $\mathbf{x}$ appears in $\omega$ then not so many points will need to be moved.
Now let us consider the effect of conditionings on two different point sets, $\mathbf{x}$ and $\mathbf{x}'$.
If $\mathbf{x}$ is much rarer than $\mathbf{x}'$, then many points need to be moved in order to get from $\omega$ to $\omega_{\mathbf{x}}$ while not so many points need to be moved in order to get from $\omega$ to $\omega_{\mathbf{x}'}$.
Since the condition $\|\omega_{\mathbf{x}}-\omega_{\mathbf{x}'}\|_{\TV} < \infty$ exactly means that the configurations $\omega_{\mathbf{x}}$ and $\omega_{\mathbf{x}'}$ differ only in finitely many points, this means that $\mathbf{x}$ and $\mathbf{x}'$ have a similar probability of appearing in $\omega$.

Alternatively, one can view conditioning on containing $\mathbf{x}$ as adding some amount of potential energy to the configuration $\omega$; if the amount of energy added is very high the system is likely to find a completely different equilibrium, and if the amount of energy added is very low the system is likely to stay close to its current equilibrium.
Thus, the condition $\|\omega_{\mathbf{x}}-\omega_{\mathbf{x}'}\|_{\TV} < \infty$ can be interpreted as saying that $\mathbf{x}$ and $\mathbf{x}'$ add a comparable amount of potential energy to the system $\omega$.
In fact, the idea to consider the energy added to a point process by (Palm) conditioning is not a new one; this is usually made quantitative via the analysis of so-called \textit{logarithmic derivatives} \cite{logDeriv}, but only for the case of (Palm) conditioning on single points.

The contribution of this subsection is to show that the similarity of potentials is closely related, via our duality theory, to many aspects of the existing literature.
In order to do this, we need to introduce some notation.
For a compact set $B\subseteq \R^k$, let us write $\F_B := \sigma(\omega(U\setminus B): U\subseteq \R^k \textrm{ open})$ for the $\sigma$-algebra generated by the configuration $\omega$ outside $B$.
Then we define the \textit{tail $\sigma$-algebra} 
\begin{equation*}
	\tail := \bigcap_{\substack{B\subseteq \R^k \\ B \textrm{ compact}}}\F_{B},
\end{equation*}
and we prove the following:

\begin{proposition}\label{prop:tail-sim-pot}
	Two point sets $\mathbf{x}$ and $\mathbf{x}'$ have similar potential with respect to a point process $\P\in \mathcal{P}(\Omega)$ if and only if $\P_{\mathbf{x}}(A) = \P_{\mathbf{x}'}(A)$ for all $A\in \tail$.
\end{proposition}

\begin{proof}
	It can be easily shown that the equivalence relation
	\begin{equation*}
		E := \bigcup_{\substack{B\subseteq \R^k \\ B \textrm{ compact}}}\{(\omega,\omega')\in \Omega\times\Omega: \omega(U\setminus B) = \omega(U\setminus B) \textrm{ for all } U\subseteq \R^k \textrm{ open}\}
	\end{equation*}
	is hypersmooth and satisfies $E^{\ast} = \tail$, hence by Corollary~\ref{cor:main-eq} that $(E,\tail)$ satisfies strong duality.
	Moreover, we observe that two point sets $\mathbf{x}$ and $\mathbf{x}'$ have similar potential with respect to a point process $\P$ if and only if there exists a coupling $\tilde{\P}\in\Pi(\P_{\mathbf{x}},\P_{\mathbf{x}'})$ with $\tilde{\P}(E) = 1$.
	Therefore, the result follows from the definition of strong duality.
\end{proof}

To push this one step further, we provide the following consequence for determinantal point processes.

\begin{corollary}\label{cor:DPP-sim-pot}
	If $\P$ is a determinantal point process and the point sets $\mathbf{x}$ and $\mathbf{x}'$ are such that the reduced Palm measures $\P_{\mathbf{x}}$ and $\P_{\mathbf{x}'}$ are mutually absolutely continuous, then $\mathbf{x}$ and $\mathbf{x}'$ have similar potential with respect to $\P$.
\end{corollary}

\begin{proof}
	By the Shirai-Takahashi theorem \cite[Theorem~1.7]{ShiraiTakahashi}, reduced Palm measures of determinantal point processes are themselves determinantal point processes.
	Moreover, we have the the Osada-Osada-Lyons zero-one law\footnote{There appears to be some amgibuity about whose names should appear with this result:
		The zero-one law was first proven for discrete spaces and conjectured for more general spaces by Lyons in \cite{LyonsDeterminantal}. Subsequently, it was proven more generally (under very mild assumptions) by Osada and Osada in \cite{OsadaOsada}.
		However, the proof in \cite{OsadaOsada} contained a small error which was later repaired by Lyons in \cite{LyonsTail}. Also see \cite[Theorem~1.7]{BufetovQiuShamov} by Bufetov, Qiu, and Shamov for closely-related concurrent work.} which states that $\tail$ is trivial under every determinantal point processes.
	Consequently, $\P_{\mathbf{x}}$ and $\P_{\mathbf{x}'}$ being mutually absolutely continuous implies that we must have $\P_{\mathbf{x}}(A) = \P_{\mathbf{x}'}(A)$ for all $A\in\tail$.
	Thus, the proof is completed by applying Proposition~\ref{prop:tail-sim-pot}.
\end{proof}

To close this subsection, we explore some consequences of combining Corollary~\ref{cor:DPP-sim-pot} with existing literature.
First, we note that many point processes of interest have the property that the reduced Palm measures $\P_{\mathbf{x}}$ and $\P_{\mathbf{x}'}$ are mutually absolutely if and only if the point sets $\mathbf{x}$ and $\mathbf{x}'$ satisfy $|\mathbf{x}|=|\mathbf{x}'|$:
indeed, this is the case for many determintal point processes on $\R$ with integrable kernels \cite[Theorem~1.4]{Bufetov}, for the Ginibre point process \cite[Theorem~1.1]{OsadaShirai}, and for certain determinantal point processes on $\C$ associated to Hilbert spaces of entire functions \cite[Theorem~1.1]{BufetovQiu}.
Consequently, all of these point processes have the property that any two point sets have similar potential whenever they have the same length.
For a somewhat different example, we recall that certain determinantal point processes on the unit disc $\mathbb{D}\subseteq\C$ associated to Hilbert spaces of holomorphic functions have the property that the reduced Palm measures of any two point sets are mutually absolutely continuous \cite[Theorem~1.4]{BufetovQiu}.
Thus, these determinantal point processes have the property that all points sets have similar potential to each other, under no assumption on their lengths.

\subsection{Random Sequence Simulation}

Our final application concerns simulating random sequences with complicated dependency structures.
For this section, let $X$ be a Polish space, and set $\Omega:=X^{\N}$ with the Borel $\sigma$-algebra of the product topology on $\Omega$.
We write $\omega = \{\omega_n\}_{n\in\N}$ for an arbitrary element of $\Omega$.


Our motivation is to try to understand which laws of random sequences can be simulated by applying a random sorting algorithm to an independent identically distributed sequence.
Towards making this precise, let us define some objects of interest.
First, write $S$ for the group of all permutations of $\N$ (that is, all bijections $\sigma:\N\to\N$), and let us endow $S$ with the topology of pointwise convergence.
Then for $n\in\N$, write $\Pi_n$ for the subgroup of all permutations $\sigma\in S$ with $\sigma(i) = i$ for $i > n$; these are the permutations that fix all elements except possibly those in $\{1,\ldots, n\}$.
We also define $\Pi_{\infty}:=\bigcup_{n\in\N}\Pi_n\subseteq S$; it is known that the topology inherited from $S$ makes $\Pi_{\infty}$ into a Polish group \cite[Theorem~6.26]{Turbulence}.
We also let $\Pi_{\infty}$ act on $\Omega$ in the natural way, by setting $\sigma\cdot\{\omega_n\}_{n\in\N} := \{\omega_{\sigma^{-1}(n)}\}_{n\in\N}$.
In particular, observe that the action is jointly continuous when viewed as a map $a:\Pi_{\infty}\times\Omega\to\Omega$.
When we wish to emphasize that one argument is fixed, we will equivalently write $a(\sigma,\omega) =: a_{\omega}(\sigma) =: a_{\sigma}(\omega)$ for $\omega\in\Omega$ and $\sigma\in \Pi_{\infty}$.

Now we introduce our main object of interest.

\begin{definition}
	For $\P,\P'\in \mathcal{P}(\Omega)$, an \emph{algorithmic reassortment from $\P$ to $\P'$} is a transition kernel $K:\Omega\times\mathcal{B}(\Pi_{\infty})\to [0,1]$ such that we have
	\begin{equation*}
		\P'(A) = \int_{\Omega}K(\omega,a_{\omega}^{-1}(A))\,d\P(\omega)
	\end{equation*}
	for all $A\in \mathcal{B}(\Omega)$;
	we say that $\P'$ \emph{is an algorithmic reassortment of} $\P$ if there exists an algorithmic reassortment from $\P$ to $\P'$.
\end{definition}

An algorithmic reassortment from $\P$ to $\P'$ exactly corresponds to an algorithm described at the beginning of this subsection:
For each $\omega\in\Omega$, we have $K(\omega,\cdot)$ which represents the required (possibly-random) permutation bringing $\omega$ to $\omega'$.
Since a random element from $K(\omega,\cdot)$ fixes all but finitely elements of $\N$ almost surely, it can be regarded as (in fact, decomposed into) a finite sequence of transpositions on $\N$, and this means that an algorithm represented by an algorithmic reassortment $K$ must terminate in a finite number of swaps.
However, such algorithms are not required to terminate in a finite number of reads; they may need to read the entire infinite sequence $\omega$ before deciding which swaps to apply.

Of course, it will turn out to be the case that algorithmic reassortment can be studied via our duality theory.
To do this, let us define the usual \textit{exchangeable $\sigma$-algebra} via
\begin{equation*}
	\exch:= \{A\in\mathcal{B}(\Omega): a_{\sigma}^{-1}(A) = A \textrm{ for all } \sigma\in \Pi_{\infty} \}.
\end{equation*}
Then we have the following:

\begin{theorem}\label{thm:reass-characterization}
	For probability measures $\P,\P'\in\mathcal{B}(\Omega)$, we have that $\P'$ is an algorithmic reassortment of $\P$ if and only if $\P(A) = \P'(A)$ for all $A\in \exch$.
\end{theorem}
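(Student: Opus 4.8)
The plan is to recognize the relevant equivalence relation as the orbit equivalence relation of the $\Pi$-action and then to invoke strong duality. Concretely, set $E_{\Pi}:=\{(\omega,\omega')\in\Omega\times\Omega:\omega'=\sigma\cdot\omega\text{ for some }\sigma\in\Pi\}$, and for each $n$ let $E_{\Pi_n}$ be the analogous relation for $\Pi_n$. Since $\Pi_n$ is finite, $E_{\Pi_n}=\bigcup_{\sigma\in\Pi_n}\{(\omega,\sigma\cdot\omega):\omega\in\Omega\}$ is a finite union of graphs of continuous self-maps of $\Omega$, hence closed in $\Omega\times\Omega$; by Lemma~\ref{lem:closed-basic} and Theorem~\ref{thm:basic-dual} it is strongly dualizable. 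As the $\Pi_n$ increase to $\Pi$, the relations $E_{\Pi_n}$ increase to $E_{\Pi}$, so Theorem~\ref{thm:ctble-dual} shows $E_{\Pi}$ is strongly dualizable. (Alternatively: $\Pi$ is countable, so $E_{\Pi}$ is a countable Borel equivalence relation and Theorem~\ref{thm:CBER-dual} applies directly.) Unwinding definitions, $A\in E_{\Pi}^{\ast}$ precisely when $a_{\sigma}^{-1}(A)=A$ for all $\sigma\in\Pi$, i.e.\ $E_{\Pi}^{\ast}=\invar_{\Pi}$. Hence strong duality for $(E_{\Pi},\invar_{\Pi})$ gives, for all $\P,\P'\in\mathcal{P}(\Omega)$,
\[
\sup_{A\in\invar_{\Pi}}|\P(A)-\P'(A)|=\min_{\tilde{\P}\in\Gamma(\P,\P')}\bigl(1-\tilde{\P}(E_{\Pi})\bigr),
\]
and in particular $\P(A)=\P'(A)$ for all $A\in\invar_{\Pi}$ if and only if there is some $\tilde{\P}\in\Gamma(\P,\P')$ with $\tilde{\P}(E_{\Pi})=1$.

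It then remains to check that $\P'$ is an algorithmic reassortment of $\P$ if and only if there exists $\tilde{\P}\in\Gamma(\P,\P')$ with $\tilde{\P}(E_{\Pi})=1$. For the forward implication, given an algorithmic reassortment $K$, I would form the probability measure $\P(d\omega)\,K(\omega,d\sigma)$ on $\Omega\times\Pi$ and push it forward under the (jointly continuous, hence Borel) map $(\omega,\sigma)\mapsto(\omega,a_{\omega}(\sigma))$; a direct check shows the resulting $\tilde{\P}$ has first marginal $\P$ (using $K(\omega,\Pi)=1$), has second marginal $\P'$ (this is exactly the defining identity of $K$), and satisfies $\tilde{\P}(E_{\Pi})=1$ since $(\omega,\sigma\cdot\omega)\in E_{\Pi}$ for every $\sigma$.

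The reverse implication is the step I expect to be the main obstacle, as it amounts to a measurable-selection argument. Since $\Omega$ is standard Borel, disintegrate $\tilde{\P}(d\omega,d\omega')=\P(d\omega)\,Q(\omega,d\omega')$ for a transition kernel $Q$, and observe that $\tilde{\P}(E_{\Pi})=1$ forces $Q(\omega,\Pi\cdot\omega)=1$ for $\P$-a.e.\ $\omega$. Enumerate $\Pi=\{\sigma_1,\sigma_2,\dots\}$ (using countability) and set $D_k:=\{(\omega,\omega'):\sigma_k\cdot\omega=\omega'\text{ and }\sigma_j\cdot\omega\neq\omega'\text{ for all }j<k\}$, which is Borel and whose $\omega$-sections $\{D_k^{\omega}\}_k$ partition $\Pi\cdot\omega$. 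Define $K(\omega,\{\sigma_k\}):=Q(\omega,D_k^{\omega})$ on the $\P$-full set where $Q(\omega,\cdot)$ is carried by $\Pi\cdot\omega$ (and $K(\omega,\cdot):=\delta_{\mathrm{id}}$ elsewhere). Then $\omega\mapsto Q(\omega,D_k^{\omega})=\int\ind_{D_k}(\omega,\omega')\,Q(\omega,d\omega')$ is measurable, and $\sum_k K(\omega,\{\sigma_k\})=Q(\omega,\Pi\cdot\omega)=1$ a.e., so $K$ is a transition kernel $\Omega\times\mathcal{B}(\Pi)\to[0,1]$. Finally, for $A\in\mathcal{B}(\Omega)$ and a.e.\ $\omega$, $K(\omega,a_{\omega}^{-1}(A))=\sum_{k:\,\sigma_k\cdot\omega\in A}Q(\omega,D_k^{\omega})=Q(\omega,A\cap\Pi\cdot\omega)=Q(\omega,A)$, whence $\int_{\Omega}K(\omega,a_{\omega}^{-1}(A))\,d\P(\omega)=\tilde{\P}(\Omega\times A)=\P'(A)$, so $K$ is an algorithmic reassortment from $\P$ to $\P'$. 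The only delicate points are the existence of the disintegration (unproblematic since $\Omega$ is Polish) and the first-index bookkeeping in the $D_k$'s, which is what converts the abstract coupling into a genuine $\Pi$-valued kernel and is made routine by the countability of $\Pi$.
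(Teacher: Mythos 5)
Your proposal is correct and shares the overall strategy of the paper: reduce to strong duality of the orbit relation $E_{\Pi}$ via the closed relations $E_{\Pi_n}$ together with Theorem~\ref{thm:basic-dual}, Lemma~\ref{lem:closed-basic}, and Theorem~\ref{thm:ctble-dual} (or directly via Theorem~\ref{thm:CBER-dual}), identify $E_{\Pi}^{\ast}=\invar_{\Pi}$, and then translate ``there exists a successful coupling'' into ``there exists an algorithmic reassortment.'' The forward direction is essentially identical to the paper's. Where you genuinely diverge is the reverse direction. The paper invokes the Kuratowski--Ryll-Nardzewski measurable selection theorem on $E_{\Pi}$ --- viewed as a subspace of $\Omega\times\Omega$, which the paper has to flag as \emph{not} standard Borel --- to produce a deterministic measurable selector $\psi:E_{\Pi}\to\Pi$ with $\psi(\omega,\omega')\cdot\omega=\omega'$, and then appeals to the existence of a regular conditional distribution of $\psi$ given the first coordinate. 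You instead disintegrate $\tilde{\P}(d\omega,d\omega')=\P(d\omega)\,Q(\omega,d\omega')$ and build the $\Pi$-valued kernel directly: enumerate $\Pi=\{\sigma_k\}$, define the first-index sets $D_k$, and set $K(\omega,\{\sigma_k\}):=Q(\omega,D_k^{\omega})$. Both arguments ultimately rest on the countability of $\Pi$, but your version is somewhat more elementary and self-contained: the first-index bookkeeping replaces the appeal to an abstract selection theorem, sidesteps the subtlety about $E_{\Pi}$ not being standard Borel, and makes explicit the verification that the resulting $K$ satisfies the defining identity --- a step the paper leaves implicit once the regular conditional distribution is invoked. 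The one remaining ingredient you do still need (and correctly flag) is the existence of the disintegration $Q$, which is available since $\Omega$ is Polish.
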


\begin{proof}
	One can easily check that the equivalence relation
	\begin{equation*}
		E := \{(\omega,\omega')\in\Omega\times\Omega: \sigma\cdot\omega = \omega' \textrm{ for some } \sigma\in \Pi_{\infty} \},
	\end{equation*}
	is hypersmooth and satisfies $E^{\ast} = \exch$, so Corollary~\ref{cor:main-eq} implies that $(E,\exch)$ satisfies strong duality.
	(Alternatively, one can directly apply Corollary~\ref{cor:CBER-dual}.)
	
	By the definition of strong duality, it therefore suffices to show that $\P'$ being an algorithmic reassortment of $\P$ is equivalent to the existence of some $\tilde{\P}\in\Pi(\P,\P')$ with $\tilde{\P}(E) = 1$.
	For the first direction, suppose that $\P'$ is an algorithmic reassortment of $\P$, and let $K$ be the guaranteed kernel.
	Now construct a probability space on which are defined $\omega$ a random element of $\Omega$ with law $\P$ and $\sigma$ a random element of $\Pi_{\infty}$ whose law conditional on $\omega$ is $K(\omega,\cdot)$.
	Then, setting, $\omega':= \sigma\cdot\omega$, it is easy to see that the joint law $\tilde{\P}$ of $(\omega,\omega')$ satisfies $\tilde{\P}(E) = 1$.
	For the second direction, we suppose that there exists $\tilde{\P}\in\Pi(\P,\P')$ with $\tilde{\P}(E) = 1$.
	Now we claim that there exists a measurable map $\psi:E\to \Pi_{\infty}$ with $\psi(\omega,\omega')\cdot\omega = \omega'$ for all $(\omega,\omega')\in E$, where $E$ is given the Borel $\sigma$-algebra of the topology inherited from $\Omega\times\Omega$; indeed this can be verified easily from the Kuratowski and Ryll-Nardzewski measurable selection theorem, but we omit the details.
	(Note, however, that $E$ is not Polish and that $(E,\mathcal{B}(E))$ is not standard Borel.)
	It follows that the probability space $(\Omega\times\Omega,\mathcal{B}(\Omega)\otimes\mathcal{B}(\Omega),\tilde{\P})$ supports a random variable $\sigma:\Omega\times\Omega\to\Pi_{\infty}$ defined for $(\omega,\omega')\in \Omega\times\Omega$ via
	\begin{equation*}
		\sigma(\omega,\omega'):=\begin{cases}
			\psi(\omega,\omega'), &\textrm{ if } (\omega,\omega')\in E, \\
			\sigma_0, &\textrm{ if } (\omega,\omega')\notin E,
		\end{cases}
	\end{equation*}
	where $\sigma_{0}\in\Pi_{\infty}$ is any arbitrary fixed element.
	Since $\Omega$ and $\Pi$ are both Polish, there exists a regular conditional distribution $K:\Omega\times\mathcal{B}(\Pi_{\infty})\to [0,1]$ of $\sigma$ with respect to $\omega$, and this completes the proof.
\end{proof}

\begin{remark}
	The preceding result shows that $\P$ is an algorithmic reassortment of $\P'$ if and only if $\P'$ is an algorithmic reassortment of $\P$, which is not obvious from the definition.
\end{remark}

Our main application of this characterization is as follows:

\begin{corollary}\label{cor:ress}
	If $\P = \bigotimes_{n\in\N}\mu$ for some $\mu\in \mathcal{P}(X)$ and $V:\Omega\to \R$ is measurable, non-negative, and has $0 < \int_{\Omega}V\, d\P < \infty$, then $\P'\in \mathcal{P}(\Omega)$ defined via
	\begin{equation*}
		\frac{d\P'}{d\P}(\omega) := \frac{V(\omega)}{\int_{\Omega}V\, d\P}
	\end{equation*}
	is an algorithmic reassortment of $\P$.
\end{corollary}

\begin{proof}
	By the Hewitt-Savage zero-one law, we have $\P(A) \in\{0,1\}$ for all $A\in \exch$.
	Moreover, $\P'$ and $\P$ are mutually absolutely continuous by construction.
	Therefore, $\P'(A) = \P(A)$ for all $A\in\exch$, so the result follows from Theorem~\ref{thm:reass-characterization}.
\end{proof}

There are still a number of interesting questions in this direction.
For instance, while we have given a rather abstract existence proof of such algorithms, we would be interested in understanding whether they can be constructed, in some generality, in a more concrete way.
Moreover, we would be interested in understanding when such algorithms can be guaranteed to require only finitely many reads, in addition to the existing guarantee of only finitely many writes.

\section{Proofs of Main Results}

In this section we develop the abstract duality theory as outlined in the introduction.
Throughout this section, $(\Omega,\F)$ denotes a fixed measurable space.

\subsection{Preliminaries}

In this subsection we review some notation, definitions, and results that will be needed throughout the paper.

The first concepts concern the elementary notions of relations.
By a \textit{relation} $R$ on $\Omega$ we mean any subset $R\subseteq\Omega\times\Omega$.
By an \textit{equivalence relation} $E$ on $\Omega$ we mean a relation $\Omega$ which is reflexive, symmetric, and transitive.
There is a well-known correspondence between equivalence relations on $\Omega$ and partitions of $\Omega$, where an equivalence relation gives rise to a partition by dividing the space into equivalence classes, and where a partition gives rise to an equivalence relation by declaring two points to be equivalent if and only if they occur in the same element of the partition.

Next we consider basic measure theory, as found in, say \cite{Kallenberg} or \cite{Bogachev}.
For this part, we let $(S,\S)$ denote an abstract measurable space; the results herein will typically be applied when $(S,\S)$ is taken to be $(\Omega,\F)$ or $(\Omega\times\Omega,\F\otimes \F)$, but some other cases will also be used.
For example, write $\R$ for the set of real numbers and $\mathcal{B}(\R)$ for the Borel $\sigma$-algebra of its standard topology.
We write $b\S$ for the space of bounded measurable functions from $(S,\S)$ to $(\R,\mathcal{B}(\R))$.
The Cartesian product of $S$ with itself is denoted $S\times S$, and the product $\sigma$-algebra of $\S$ with itself, that is the $\sigma$-algebra generated by the $\S\times\S$, is denoted $\S\otimes\S$.
We write $\pi,\pi':S\times S\to S$ for the projection maps onto the first and second coordinates, respectively.

Now, we write $\mathcal{M}(S,\S)$ for the space of finite signed mesaures on $(S,\S)$.
We endow $\mathcal{M}(S,\S)$ with the partial order $\le$ where $\mu,\mu'\in \mathcal{M}(S,\S)$ have $\mu \le \mu'$ if and only if we have $\mu(A)\le \mu'(A)$ for all $A\in\S$.
For $\mu,\mu'\in \mathcal{M}(S,\S)$ there exists \cite[Corollary~2.9]{Kallenberg} a unique element of $ \mathcal{M}(S,\S)$ which is $\le$-maximal among all elements which are $\le$-bounded above by both $\mu$ and $\mu'$, and we denote this by $\mu\wedge \mu'$.
In fact, if $P,N\in\S$ are respectively the positive part and negative part of the Jordan decomposition of the signed measure $\mu-\mu'$, then we have $\mu\wedge \mu' = \mu(\cdot \cap N) + \mu'(\cdot \cap P)$.

We write $\mathcal{P}(S,\S)$ for the space of probability measures on $(S,\S)$.
By a \textit{sub-probability measure} on $(S,\S)$ we mean an element $\mu\in \mathcal{M}(S,\S)$ with $0 \le \mu(A) \le 1$ for all $A\in\S$, and we write $\mathcal{P}_s(S,\S)\supseteq\mathcal{P}(S,\S)$ for the space of all sub-probability measures on $(S,\S)$.
For any $\mu,\mu'\in \mathcal{P}_s(S,\S)$, a \textit{coupling} of $\mu$ and $\mu'$ is an element $\nu\in\mathcal{P}_s(S\times S,\S\otimes\S)$ satisfying $\nu\circ \pi^{-1}= \mu$ and $\nu\circ (\pi')^{-1}= \mu'$, and a \textit{sub-coupling} of $\mu$ and $\mu'$ is an element $\nu\in\mathcal{P}_s(S\times S,\S\otimes\S)$ satisfying $\nu\circ \pi^{-1}\le \mu$ and $\nu\circ (\pi')^{-1}\le \mu'$.
The spaces of couplings and sub-couplings of $\mu,\mu'\in \mathcal{P}_s(S,\S)$ are denoted $\Pi(\mu,\mu')$ and $\Pi_s(\mu,\mu')$, respectively; observe that $\Pi(\mu,\mu')$ is empty unless $\mu(S) = \mu'(S)$.

Finally, we review some concepts related to Polish spaces; these ideas can be found in standard sources on measure theory like \cite{Bogachev}, or in more specialized treatments like \cite{KechrisClassical}.
By a \textit{Polish space} we mean a separable topological space $(X,\tau)$ such that there exists a complete metric $d$ on $X$ which generates the topology $\tau$.
We write $\mathcal{B}(\tau)$ for the \textit{Borel $\sigma$-algebra} of a Polish space, that is, the $\sigma$-algebra generated by the open sets.
By a \textit{standard Borel space} we mean a measurable space $(X,\mathcal{X})$ such that there exists a topology $\tau$ on $X$ making $(X,\tau)$ into a Polish space and such that we have $\mathcal{B}(\tau) =\mathcal{X}$

Let us also review our central definition; let $(\Omega,\F)$ denote an abstract measurable space, and consider any pair $(E,\G)$ where $E$ is a relation on $\Omega$ and $\G$ is a subset of $\F$.
We say that $E$ is \textit{measurable} if $E\in\F\otimes\F$.
We say that $(E,\G)$ satisfies \textit{strong duality} if $E$ is measurable and if we have
\begin{equation*}
	\max_{A\in \G}|\P(A) - \P'(A)| = \min_{\tilde{\P}\in\Pi(\P,\P')}(1-\tilde{\P}(E))
\end{equation*}
for all $\P,\P'\in \mathcal{P}(\Omega,\F)$.
An event $A\in\G$ achieving the supremum on the left is called \textit{$\G$-optimal for $\P,\P'$} and a coupling $\tilde{\P}\in \Pi(\P,\P')$ achieving the minimum on the right side is called \textit{$E$-optimal for $\P,\P'$}; a coupling $\tilde{\P}\in \Pi(\P,\P')$ satisfying $1-\tilde{\P}(A) = \sup_{A\in\G}|\P(A)-\P'(A)|$ will be called \textit{$(E,\G)$-optimal}, and note that $E$-optimality is equivalent to $(E,\G)$-optimality if $(E,\G)$ satisfies strong duality.
Further, a coupling satisfying $\tilde{\P}(E) = 1$ is called \textit{$E$-successful}.
It turns out that it will also be useful along the way to consider two further notions of duality; we briefly introduce them now, and in the following subsections we study them more carefully.

First, let us say that a pair $(E,\G)$ satisfies \textit{weak duality} if $E$ is measurable and if we have
\begin{equation}\label{eqn:weak-dual}
	\sup_{A\in \G}|\P(A) - \P'(A)| \le \inf_{\tilde{\P}\in\Pi(\P,\P')}(1-\tilde{\P}(E))
\end{equation}
for all $\P,\P'\in \mathcal{P}(\Omega,\F)$.
Let us emphasize that the infimum need not be achieved in that $E$-optimal couplings are not required to exist (and that we will later show that the supremum is always achieved.)
Moreover, observe that strong duality obviously implies weak duality.

Second, let us say that a pair $(E,\G)$ satisfies \textit{quasi-strong duality} if, for all $\P,\P'\in \mathcal{P}(\Omega,\F)$, the following are equivalent:
\begin{itemize}
	\item[(i)] For all $A\in \G$, we have $\P(A) = \P'(A)$.
	\item[(ii)] There exists a $\tilde{\P}\in \Pi(\P,\P')$ and a $N\in\F\otimes\F$ with $\tilde{\P}(N) = 0$ and $(\Omega\times\Omega)\setminus E\subseteq N$.
\end{itemize}
Crucially, observe that the measurability of $E$ is not required in order for $(E,\G)$ to satisfy quasi-strong duality;
if $E$ happens to be measurable, then (ii) above is simply equivalent to the existence of an $E$-successful coupling.
The primary desire to generalize beyond measurable relations is to be able to say something about analytic equivalence relations on standard Borel spaces, which are a common object of study in ergodic theory \cite{Georgii}.

\subsection{Weak Duality}\label{subsec:weak}

In this subsection we study weak duality as a stepping stone to strong duality.
The results herein provide some reductions which simplify our later work.

To begin, we state a fundamental set-theoretic correspondence between equivalence relations and sub-$\sigma$-algebras.

\begin{definition}\label{def:Galois}
	For any relation $E$ on $\Omega$, define the sub-$\sigma$-algebra 
	\begin{equation*}
		E^{\ast} := \{A\in\F: \forall(\omega,\omega')\in E: (\omega\in A \Leftrightarrow \omega'\in A) \}
	\end{equation*}
	of $\F$, and, for any subset $\G$ of $\F$, define the equivalence relation
	\begin{equation*}
		\G^{\ast} := \{(\omega,\omega')\in \Omega\times\Omega: \forall A\in \G:(\omega\in A \Leftrightarrow \omega'\in A) \}.
	\end{equation*}
	on $\Omega$.
\end{definition}

We point out that the collection $E^{\ast}$ is a $\sigma$-algebra even when $E$ is not an equivalence relation, and likewise the relation $\G^{\ast}$ is an equivalence relation even when $\G$ is not a $\sigma$-algebra.
Although these operations will typically be applied when $E$ and $\G$ are, respecively, an equivalence relation and a $\sigma$-algebra, the added generality will be useful in some cases.
We also have the following:

\begin{remark}\label{ref:invariant-salg-ambient}
	The map $E\mapsto E^{\ast}$ depends on the ambient $\sigma$-algebra $\F$ with which $\Omega$ is endowed, while the map $\G\mapsto \G^{\ast}$ does not.
	We hope it causes no confusion that this dependence is not emphasized in the notation.
\end{remark}

Another important collection of remarks concerns the algebraic structure of this correspondence.
While such algebraic terminology does not immediately pay dividends for our work, we believe that it is nonetheless useful to highlight the perspective;
we direct the reader to \cite[Section~1.6]{Blyth} for further information on abstract order theory.

\begin{lemma}\label{lem:Galois-basics}
	For $E$ a relation on $\Omega$ and $\G$ a subset of $\F$, we have
	\begin{itemize}
		\item[(i)] $E^{\ast\ast} \supseteq E$ and $\G^{\ast\ast} \supseteq \G$, and
		\item[(ii)] $E\subseteq\G^{\ast}$ if and only if $\G\subseteq E^{\ast}$.
	\end{itemize}
	In other words, the correspondence given by $\,^{\ast}$ is an antitone Galois correspondence.
\end{lemma}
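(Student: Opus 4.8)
The plan is to verify the two claimed properties directly from Definition~\ref{def:Galois}, since both reduce to elementary manipulations of the defining logical conditions. For part (i), I would expand $E^{\ast\ast}$ according to the definition: $E^{\ast\ast} = (E^{\ast})^{\ast} = \{(\omega,\omega')\in\Omega\times\Omega : \forall A\in E^{\ast}, (\omega\in A \Leftrightarrow \omega'\in A)\}$. So let $(\omega,\omega')\in E$ be arbitrary; I must show $(\omega,\omega')\in E^{\ast\ast}$, i.e.\ that for every $A\in E^{\ast}$ we have $\omega\in A \Leftrightarrow \omega'\in A$. But this is exactly the defining property of membership in $E^{\ast}$: any $A\in E^{\ast}$ satisfies $\forall(\omega,\omega')\in E, (\omega\in A \Leftrightarrow \omega'\in A)$, and since our chosen pair $(\omega,\omega')$ lies in $E$, it is covered. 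Hence $E\subseteq E^{\ast\ast}$. The inclusion $\G\subseteq\G^{\ast\ast}$ is the symmetric statement: given $A\in\G$, one checks $A\in\G^{\ast\ast} = \{B\in\F : \forall(\omega,\omega')\in\G^{\ast}, (\omega\in B\Leftrightarrow\omega'\in B)\}$, and for any $(\omega,\omega')\in\G^{\ast}$ the defining condition of $\G^{\ast}$ applied to the particular set $A\in\G$ gives $\omega\in A\Leftrightarrow\omega'\in A$. (One should note in passing, as the remarks before the lemma already do, that $A\in\G$ is indeed in $\F$ since $\G\subseteq\F$, so $A\in\G^{\ast\ast}$ makes sense.)

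For part (ii), I would unwind both sides to see they are the same statement. The inclusion $E\subseteq\G^{\ast}$ says: for every $(\omega,\omega')\in E$ and every $A\in\G$, we have $\omega\in A \Leftrightarrow \omega'\in A$. The inclusion $\G\subseteq E^{\ast}$ says: for every $A\in\G$ and every $(\omega,\omega')\in E$, we have $\omega\in A \Leftrightarrow \omega'\in A$. These are literally the same condition with the order of the two universal quantifiers swapped, so they are logically equivalent, and the biconditional follows immediately.

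Finally, to justify the concluding sentence, I would remark that (i) and (ii) together are precisely the axioms of an antitone (order-reversing) Galois connection between the poset of relations on $\Omega$ ordered by inclusion and the poset of subsets of $\F$ ordered by inclusion: one typically defines a Galois connection by the adjunction property (ii) plus monotonicity of the two maps, but it is standard (see \cite[Section~1.6]{Blyth}) that the adjunction (ii) together with the deflationary/inflationary bounds (i) is an equivalent axiomatization; alternatively, antitonicity of $E\mapsto E^{\ast}$ and $\G\mapsto\G^{\ast}$ follows formally from (ii), and (i) follows from antitonicity plus (ii). I expect no real obstacle here: the only thing to be careful about is keeping the quantifier order straight and remembering that $E^{\ast}$ and $\G^{\ast}$ really are, respectively, a $\sigma$-algebra and an equivalence relation (as noted in the remarks), so that the iterated operations $E^{\ast\ast}$ and $\G^{\ast\ast}$ are well-formed; none of this requires any measure theory beyond the observation that $\F$ is closed under the relevant operations.
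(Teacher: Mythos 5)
Your proof is correct. The paper in fact omits a proof of Lemma~\ref{lem:Galois-basics} entirely, treating both parts as immediate unwindings of Definition~\ref{def:Galois}; the direct quantifier-manipulation argument you give is exactly the verification the authors leave implicit, and your side remark that $A\in\G\subseteq\F$ is needed for $A\in\G^{\ast\ast}$ to be well-formed is a reasonable bit of care.
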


As is the case in many antitone Galois correspondences, it is instructive to understand the operation arising by applying the correspondence twice.
(In general, the resulting operation is not the identity.)

\begin{lemma}\label{lem:equiv-double-dual}
	For $E$ a relation on $\Omega$ we have $E^{\ast\ast} = E$ if and only if $E$ is an equivalence relation.
\end{lemma}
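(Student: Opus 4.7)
The plan is to exploit the antitone Galois nature of the correspondence $E\mapsto E^\ast$, $\G\mapsto\G^\ast$ already developed in Lemma~\ref{lem:Galois-basics}, together with a simple section argument.

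For the ``only if'' direction I would first observe that for any subset $\G$ of $\F$, the relation $\G^\ast$ is automatically an equivalence relation: reflexivity, symmetry, and transitivity all follow directly from the corresponding properties of the biconditional $\Leftrightarrow$. Hence $E^{\ast\ast}=(E^\ast)^\ast$ is always an equivalence relation, and so the hypothesis $E=E^{\ast\ast}$ immediately forces $E$ to be an equivalence relation. Notice that this direction makes no use of reflexivity/symmetry/transitivity of $E$ itself.

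For the ``if'' direction, I would begin from Lemma~\ref{lem:Galois-basics}(i), which gives $E\subseteq E^{\ast\ast}$ for free, so the task reduces to the reverse inclusion $E^{\ast\ast}\subseteq E$. I argue by contrapositive: fix $(\omega,\omega')\notin E$ and try to produce some $A\in E^\ast$ separating $\omega$ from $\omega'$. The natural candidate is the $E$-equivalence class $A:=[\omega]_E=\{x\in\Omega:(\omega,x)\in E\}$, which contains $\omega$ by reflexivity and excludes $\omega'$ by the choice of $(\omega,\omega')$. Moreover $A$ is $E$-saturated: if $(\omega_1,\omega_2)\in E$ and $\omega_1\in A$, then $(\omega,\omega_1),(\omega_1,\omega_2)\in E$ give $(\omega,\omega_2)\in E$ by transitivity, so $\omega_2\in A$; the reverse implication comes from the same argument together with symmetry.

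The main obstacle is then to verify $A\in\F$, and this is where I would invoke the standard fact that the $\omega$-section of an $(\F\otimes\F)$-measurable set is itself $\F$-measurable, using the implicit convention throughout the duality theory that the relations $E$ under consideration are measurable. With $A\in\F$ and $A$ saturated, we have $A\in E^\ast$; this $A$ witnesses $(\omega,\omega')\notin E^{\ast\ast}$, completing the contrapositive and hence the proof.
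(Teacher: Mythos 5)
Your proof is correct and takes essentially the same route as the paper's: both hinge on the observation that $E^{\ast\ast}$ is automatically an equivalence relation, and on showing $[\omega]_E\in E^\ast$ via measurability of the $\omega$-section of $E$ together with reflexivity, symmetry, and transitivity. The only cosmetic difference is that you argue the inclusion $E^{\ast\ast}\subseteq E$ by contrapositive rather than directly; you also correctly flag that the argument uses $E\in\F\otimes\F$, a hypothesis the paper's proof relies on but the lemma's statement leaves implicit.
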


\begin{proof}
	First, suppose that $E$ is an equivalence relation, and let us show that $E^{\ast\ast} = E$.
	Since we have $E\subseteq E^{\ast\ast}$ by Lemma~\ref{lem:Galois-basics}, it suffices to show $E^{\ast\ast}\subseteq E$.
	Indeed, take any $(\omega,\omega')\in E^{\ast\ast}$ and recall that for any $A\in E^{\ast}$ that $\omega\in A$ if and only if $\omega'\in A$.
	Now combine $E\in \F\otimes \F$ with Fubini's theorem to get $[\omega]_E := \{\omega''\in \Omega: (\omega,\omega'')\in \Omega\}\in\F$, then use the symmetry and transitivity of $E$ to get $[\omega]_E\in E^{\ast}$.
	By the reflexivity of $E$ we have $\omega\in [\omega]_E$, hence $(\omega,\omega')\in E^{\ast\ast}$ gives $\omega'\in [\omega]_E$.
	This is exactly $(\omega,\omega')\in E$, hence $E^{\ast\ast}\subseteq E$, as needed.
	Second, note that $E^{\ast\ast} = E$ implies that $E$ is an equivalence relation, since $E^{\ast\ast}$ is always an equivalence relation.
\end{proof}

As we see in the following, it is not true that $\G$ being a $\sigma$-algebra implies $\G^{\ast\ast}= \G$.

\begin{example}\label{ex:ctble-coctble}
	Let $\Omega$ be an uncountable set, let $\F$ be the $\sigma$-algebra of all subsets of $\Omega$, and let $\G$ be the $\sigma$-algebra of all sets which are countable or whose complements are countable.
	Then $\G^{\ast} = \Delta$ and $\G^{\ast\ast} = \F\supsetneq\G$.
\end{example}

Having clarified some aspects of the Galois correspondence, our next goal is to show that it is indeed useful in studying weak and strong duality.
As a first indication of this, let us show how the Galois correspondence leads to a necessary and sufficient condition for weak duality.

\begin{proposition}\label{prop:weak-dual}
	For $E$ a relation on $\Omega$ and $\G$ a subset of $\F$ with $\Omega\in\G$, the pair $(E,\G)$ satisfies weak duality if and only if $E$ is measurable and $E\subseteq \G^{\ast}$ and (equivalently, or) $\G\subseteq E^{\ast}$.
\end{proposition}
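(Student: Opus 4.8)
The plan is to prove the two implications separately, using the Galois correspondence from Lemma~\ref{lem:Galois-basics} to handle the ``equivalently, or'' clause for free: by part (ii) of that lemma, $E\subseteq\G^{\ast}$ holds if and only if $\G\subseteq E^{\ast}$, so it suffices to work with whichever formulation is convenient. I would phrase the substantive content as: $(E,\G)$ satisfies weak duality if and only if $E$ is measurable and $E\subseteq\G^{\ast}$.

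For the ``only if'' direction, suppose $(E,\G)$ satisfies weak duality; then $E$ is measurable by definition, so it remains to show $E\subseteq\G^{\ast}$. Here I would use the standard trick of plugging point masses into the weak-duality inequality \eqref{eqn:weak-dual}. Fix $(\omega,\omega')\in E$ and take $\P=\delta_\omega$, $\P'=\delta_{\omega'}$. The unique coupling in $\Gamma(\delta_\omega,\delta_{\omega'})$ is $\delta_{(\omega,\omega')}$, and since $(\omega,\omega')\in E$ we have $\delta_{(\omega,\omega')}(E)=1$, so the right side of \eqref{eqn:weak-dual} equals $0$. Hence $\sup_{A\in\G}|\delta_\omega(A)-\delta_{\omega'}(A)|=0$, which means for every $A\in\G$ that $\omega\in A\Leftrightarrow\omega'\in A$. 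That is exactly $(\omega,\omega')\in\G^{\ast}$, so $E\subseteq\G^{\ast}$.

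For the ``if'' direction, assume $E$ is measurable and $E\subseteq\G^{\ast}$ (equivalently $\G\subseteq E^{\ast}$), and fix arbitrary $\P,\P'\in\mathcal{P}(\Omega,\F)$ and any coupling $\tilde\P\in\Gamma(\P,\P')$. I need $\sup_{A\in\G}|\P(A)-\P'(A)|\le 1-\tilde\P(E)$, and since the right side does not depend on $A$ it suffices to bound $|\P(A)-\P'(A)|$ for each fixed $A\in\G$. Using $\G\subseteq E^{\ast}$, the set $A$ is $E$-saturated, so $(\omega,\omega')\in E$ implies $\ind_A(\omega)=\ind_A(\omega')$; equivalently, on the event $E$ the functions $\ind_A\circ\pi$ and $\ind_A\circ\pi'$ agree. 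Therefore
\begin{equation*}
\P(A)-\P'(A)=\int_{\Omega\times\Omega}\big(\ind_A\circ\pi-\ind_A\circ\pi'\big)\,d\tilde\P=\int_{(\Omega\times\Omega)\setminus E}\big(\ind_A\circ\pi-\ind_A\circ\pi'\big)\,d\tilde\P,
\end{equation*}
and since the integrand is bounded in absolute value by $1$, we get $|\P(A)-\P'(A)|\le\tilde\P\big((\Omega\times\Omega)\setminus E\big)=1-\tilde\P(E)$. Taking the supremum over $A\in\G$ and then the infimum over $\tilde\P\in\Gamma(\P,\P')$ gives \eqref{eqn:weak-dual}, as desired.

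I expect no serious obstacle here; the only points requiring minor care are that the hypothesis $\Omega\in\G$ is invoked only to ensure $\Gamma(\P,\P')$ and $\G$-based quantities interact sensibly (and to make the empty-$\G$ case uninteresting), that measurability of $E$ is needed for $\tilde\P(E)$ to make sense, and that the ``equivalently, or'' in the statement is purely a restatement of Lemma~\ref{lem:Galois-basics}(ii). The one genuinely substantive idea is the reduction in the ``if'' direction to a pointwise bound on the integrand off of $E$, which is elementary once the $E$-saturation of sets in $E^{\ast}$ is exploited.
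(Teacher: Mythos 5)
Your proof is correct and follows essentially the same route as the paper's: the ``only if'' direction uses point masses $\delta_\omega,\delta_{\omega'}$ exactly as in the paper, and your ``if'' direction is the paper's set-wise decomposition rephrased as an integral vanishing on $E$, with the same appeal to Lemma~\ref{lem:Galois-basics}(ii) for the equivalence of $E\subseteq\G^{\ast}$ and $\G\subseteq E^{\ast}$. The only cosmetic difference is that your integral formulation delivers the absolute-value bound in one stroke, whereas the paper's presentation bounds $\P(A)-\P'(A)$ and relies on symmetry for the other sign.
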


\begin{proof}
	First suppose that $(E,\G)$ satisfies weak duality.
	By definition, $E$ is measurable, so we only need to show $\G\subseteq E^{\ast}$ and $E\subseteq \G^{\ast}$.
	These are equivalent by part (ii) of Lemma~\ref{lem:Galois-basics}, so it suffices to show $\G\subseteq E^{\ast}$.
	Indeed, take any $(\omega,\omega')\in E$, and note that by setting $\P:= \delta_{\omega}$ and $\P':=\delta_{\omega'}$, we have
	\begin{equation*}
		\sup_{A\in \G}|\ind\{\omega\in A\} - \ind\{\omega'\in A\} \le 1-\ind\{(\omega,\omega')\in E\} = 0.
	\end{equation*}
	This says that for any $A\in \G$ we have $\omega\in A$ if and only if $\omega'\in A$.
	In other words, $\G\subseteq E^{\ast}$.
	
	Conversely, suppose that $E$ is measurable and that $\G\subseteq E^{\ast}$ and $E\subseteq \G^{\ast}$.
	Note that $E\subseteq \G^{\ast}$ implies that for any $A\in\G$ we have
	\begin{equation*}
		\{(\omega,\omega')\in \Omega\times\Omega:\omega\in A\}\cap E = \{(\omega,\omega')\in \Omega\times\Omega:\omega'\in A\} \cap E.
	\end{equation*}
	Thus, for any $\P,\P'\in \mathcal{P}(\Omega,\F)$ and $\tilde{\P}\in \Pi(\P,\P')$, we can bound:
	\begin{equation*}
		\begin{split}
			\P(A) - \P'(A) = \tilde{\P}(\omega\in A) - \tilde{\P}(\omega'\in A) &=\tilde{\P}(\{\omega\in A\}\setminus E) \\
			&\qquad- \tilde{\P}(\{\omega'\in A\}\setminus E) \\
			&\le \tilde{\P}(\{\omega\in A\}\setminus E).
		\end{split}
	\end{equation*}
	Now take the supremum over $A\in\G$ and use that $\Omega\in \G$, then take the infimum over $\tilde{\P}\in \Pi(\P,\P')$ to conclude.
\end{proof}

As we have seen in Remark~\ref{ref:invariant-salg-ambient} and Example~\ref{ex:ctble-coctble}, the roles of $E$ and $\G$ in our duality theory are not completely symmetric.
While in the bulk of the paper we typically aim to state our results in the most symmetric form possible, we will address, in Subsection~\ref{subsec:equiv} and Subsection~\ref{subsec:salg}, the particularities associated with taking either of $E$ or $\G$ as given.

\begin{corollary}\label{cor:weak-dual}
	For any measurable relation $E$ on $\Omega$, the pair $(E,E^{\ast})$ satisfies weak duality.
	For any $\G\subseteq\F$ such that $\G^{\ast}$ is measurable and $\Omega\in\G$, the pair $(\G^{\ast},\G)$ satisfies weak duality.
\end{corollary}

\begin{proof}
	By part (i) of Lemma~\ref{lem:Galois-basics}, we have $E\subseteq E^{\ast\ast}$ and $\G\subseteq \G^{\ast\ast}$, and we of course also have $\Omega\in E^{\ast}$.
	Thus, the result follows from Proposition~\ref{prop:weak-dual}.
\end{proof}

\subsection{Quasi-Strong Duality}\label{subsec:pseudo-strong}

In this brief subsection we study quasi-strong duality as it relates to strong duality.
We begin with a simple but useful auxiliary result.

\begin{lemma}\label{lem:completion}
	For any $\P,\P'\in \mathcal{P}(\Omega,\F)$, if $\tilde{\M}\in \Pi_{s}(\P,\P')$ is a sub-coupling of $\P$ and $\P'$, then there exists a coupling $\tilde{\P}\in \Pi(\P,\P')$ with $\tilde{\M}\le \tilde{\P}$.
\end{lemma}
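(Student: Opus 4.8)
The plan is to extend $\tilde{\M}$ to a coupling by filling in its ``missing mass'' with a suitably normalized product measure. Set $m := \tilde{\M}(\Omega\times\Omega)$; since $\tilde{\M}\circ\pi^{-1}\le\P$, evaluating at $\Omega$ gives $m\le 1$, so $m\in[0,1]$. The idea is that the two coordinate defects $\P-\tilde{\M}\circ\pi^{-1}$ and $\P'-\tilde{\M}\circ(\pi')^{-1}$ are nonnegative measures of equal total mass $1-m$, which can be glued back together as an (independent) product.

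First I would dispose of the degenerate case $m=1$. In that case the inequality $\tilde{\M}\circ\pi^{-1}\le\P$ between two probability measures of total mass $1$ forces $\tilde{\M}\circ\pi^{-1}=\P$, and likewise $\tilde{\M}\circ(\pi')^{-1}=\P'$, so $\tilde{\M}$ is already a coupling and one simply takes $\tilde{\P}:=\tilde{\M}$.

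Second, assuming $m<1$, I would introduce the finite measures $\mu:=\P-\tilde{\M}\circ\pi^{-1}$ and $\mu':=\P'-\tilde{\M}\circ(\pi')^{-1}$ on $(\Omega,\F)$; these are nonnegative precisely by the sub-coupling hypothesis, and each has total mass $1-m>0$. Then I would set $\nu:=\tfrac{1}{1-m}\,\mu\otimes\mu'$, a finite measure on $(\Omega\times\Omega,\F\otimes\F)$, and define $\tilde{\P}:=\tilde{\M}+\nu$.

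It then remains only to check three routine facts. (i) $\tilde{\P}$ is a probability measure, since it is nonnegative and $\tilde{\P}(\Omega\times\Omega)=m+\tfrac{1}{1-m}\mu(\Omega)\mu'(\Omega)=m+(1-m)=1$. (ii) $\tilde{\P}$ has the correct marginals, since $\nu\circ\pi^{-1}=\tfrac{1}{1-m}\mu(\cdot)\,\mu'(\Omega)=\mu$, hence $\tilde{\P}\circ\pi^{-1}=\tilde{\M}\circ\pi^{-1}+\mu=\P$, and symmetrically $\tilde{\P}\circ(\pi')^{-1}=\P'$, so that $\tilde{\P}\in\Gamma(\P,\P')$. (iii) $\tilde{\M}\le\tilde{\P}$, because $\nu\ge 0$ gives $\tilde{\P}(A)=\tilde{\M}(A)+\nu(A)\ge\tilde{\M}(A)$ for every $A\in\F\otimes\F$. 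There is essentially no obstacle in this argument: the only point requiring care is the division by $1-m$, which is exactly why the case $m=1$ must be separated out first, and the whole proof uses nothing beyond the elementary theory of finite measures and the existence of product measures.
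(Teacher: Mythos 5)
Your proposal is correct and coincides with the paper's proof: both decompose the defect measures $\P-\tilde{\M}\circ\pi^{-1}$ and $\P'-\tilde{\M}\circ(\pi')^{-1}$, handle the degenerate case where $\tilde{\M}$ is already a probability measure separately, and otherwise fill in the missing mass with the normalized product of the two defects. The only differences are notational (your $1-m$ is the paper's $\gamma$, your $\mu,\mu',\nu$ are its $\Q,\Q',\Q\otimes\Q'/\gamma$).
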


\begin{proof}
	By assumption, $\Q := \P-\tilde{\M}\circ \pi^{-1}$ and $\Q' := \P'-\tilde{\M}\circ (\pi')^{-1}$ are sub-probability measures on $(\Omega,\F)$.
	Of course, they have the same total mass, which we denote $\gamma \in [0,1]$.
	More explicitly, we have
	\begin{equation*}
		\gamma = \Q(\Omega) = \P(\Omega)-(\tilde{\M}\circ \pi^{-1})(\Omega) = 1 - \tilde{\M}(\Omega\times\Omega).
	\end{equation*}
	The remainder of the proof proceeds in two cases.
	
	If $\gamma = 0$, we take $\tilde{\P} := \tilde{\M}$.
	To see that it is a probability measure, compute
	\begin{align*}
		0 = \gamma = \Q(\Omega) &= \P(\Omega) - (\tilde{\M}\circ\pi^{-1})(\Omega) \\
		&= 1 - \tilde{\M}(\Omega\times\Omega) \\
		&= 1 - \tilde{\P}(\Omega\times\Omega).
	\end{align*}
	To see that it has the correct marginals, observe that $\tilde{\P}\circ \pi^{-1}\le \P$ with $\tilde{\P}$ a probability measure implies that $\tilde{\P}\circ \pi^{-1}= \P$; likewise for $\tilde{\P}\circ (\pi')^{-1}$.
	
	If $\gamma > 0$, we take
	\begin{equation*}
		\tilde{\P} := \tilde{\M} + \frac{\Q\otimes \Q'}{\gamma}.
	\end{equation*}
	To see that it is a probability measure, compute
	\begin{equation*}
		\tilde{\P}(\Omega\times\Omega) = \tilde{\M}(\Omega\times\Omega) + \frac{\gamma^2}{\gamma} = \tilde{\M}(\Omega\times\Omega) + \gamma = 1.
	\end{equation*}
	To see that it has the correct marginals, compute
	\begin{equation*}
		\tilde{\P}\circ \pi^{-1} = \tilde{\M}\circ \pi^{-1}+\frac{\Q'(\Omega)}{\gamma}\Q = \tilde{\M}\circ \pi^{-1}+\Q = \P,
	\end{equation*}
	and likewise for $\tilde{\P}\circ(\pi')^{-1}$.
\end{proof}

From this we get the following.

\begin{proposition}\label{prop:pseudo-strong-dual}
	On a standard Borel space $(\Omega,\F)$, a pair $(E,\G)$ satisfies strong duality if and only if $E$ is measurable and $(E,\G)$ satisfies quasi-strong duality.
\end{proposition}

\begin{proof}
	Let $E$ be an equivalence relation on $\Omega$ and $\G$ a sub-$\sigma$-algebra of $\F$.
	It is clear that $(E,\G)$ satisfying strong duality implies that it satisfies quasi-strong duality.
	Conversely, suppose that $E$ is measurable and that $(E,\G)$ satisfies quasi-strong duality.
	To see that $(E,\G)$ satisfies strong duality, let us show that an $(E,\G)$-optimal coupling exists for all $\P,\P'\in \mathcal{P}(\Omega,\F)$.
	To do this, consider $\mu := \P|_{\G}$ and $\mu' := \P'|_{\G}$ as probability measures on the measurable space $(\Omega,\G)$, and set $\nu := \mu \wedge \mu'$.
	(Note that $\nu$ is not in general equal to $(\P\wedge \P')|_{\G}$.)
	Next, write $K:\Omega\times\F\to [0,1]$ for a regular conditional distribution of $\P$ with respect to $\G$, and $K':\Omega\times\F\to [0,1]$ for a regular conditional distribution of $\P'$ with respect to $\G$.
	Finally, define sub-probability measures $\M,\M'\in \mathcal{P}_s(\Omega,\F)$ via $\M(A) := \int_{\Omega}K(\omega,A)\, d\nu(\omega)$ and $\M'(A) := \int_{\Omega}K'(\omega,A)\, d\nu(\omega)$ for all $A\in \F$.
	
	By construction we have $\M(\Omega) = \M'(\Omega) = \nu(\Omega)$.
	Moreover, for $A\in\G$ we have $K(\omega,A) = \ind_{A}(\omega)$ holding $\P$-almost surely hence $\mu$-almost surely, and $\nu\ll\mu$ implies that it further holds $\nu$-almost surely; similarly, for $A\in\G$ we have $K'(\omega,A) = \ind_{A}(\omega)$ holding $\nu$-almost surely.
	In particular, for $A\in\G$ we have
	\begin{equation*}
		\M(A) = \int_{\Omega}K(\omega,A)\, d\nu(\omega) = \nu(A) =  \int_{\Omega}K'(\omega,A)\, d\nu(\omega) = \M'(A).
	\end{equation*}
	This means we can apply quasi-strong duality (to a suitable rescaling of $\M$ and $\M'$) to get that there must exist some $\tilde{\M}\in \Pi(\M,\M')$ and some $N\in\F\otimes\F$ with $\tilde{\M}(N) = 0$ and $(\Omega\times\Omega)\setminus E\subseteq N$.
	
	Since $E$ is measurable, this implies
	\begin{equation*}
		\tilde{\M}(\Omega\times\Omega) - \tilde{\M}(E) = \tilde{\M}((\Omega\times\Omega)\setminus E) \le \tilde{\M}(N) = 0
	\end{equation*}
	hence $\tilde{\M}(E) = \tilde{\M}(\Omega\times\Omega) = \nu(\Omega)$.
	In fact, for $A\in \F$ we have
	\begin{align*}
		(\tilde{\M}\circ \pi^{-1})(A) = \M(A) &= \int_{\Omega}K(\omega,A)\, d\nu(\omega) \\
		&\le \int_{\Omega}K(\omega,A)\, d\mu(\omega) \\
		&= \int_{\Omega}K(\omega,A)\, d\P(\omega) = \P(A),
	\end{align*}
	hence $\tilde{\M}\circ \pi^{-1}\le \P$.
	Likewise we have $\tilde{\M}\circ (\pi')^{-1}\le \P'$.
	This gives $\tilde{\M}\in \Pi_s(\P,\P')$, so we can apply Lemma~\ref{lem:completion} to get some $\tilde{\P}\in\Pi(\P,\P')$ with $\tilde{\M}\le \tilde{\P}$.
	
	Finally, let $P,N\in \G$ denote the positive and negative part of the Jordan decomposition of the signed measure $\mu-\mu'$ on $(\Omega,\G)$.
	It is well-known \cite[Corollary~2.9]{Kallenberg} that we have
	\begin{equation*}
		\nu(A)= \mu(A)-\mu(A\cap P)+\mu'(A\cap P)
	\end{equation*}
	for all $A\in \G$, and thus
	\begin{align*}
		\nu(\Omega)  &= 1-\mu(P)+\mu'(P) \\
		&\ge 1- \sup_{A\in \G}|\mu(A)-\mu'(A)| \\
		&= 1- \sup_{A\in \G}|\P(A)-\P'(A)|.
	\end{align*}
	Combining this all, we have shown
	\begin{equation*}
		1-\tilde{\P}(E) \le 1-\tilde{\M}(E) = 1- \nu(\Omega) \le \sup_{A\in \G}|\P(A)-\P'(A)|.
	\end{equation*}
	Since Proposition~\ref{prop:weak-dual} establishes the reverse inequality, we have shown that $\tilde{\P}$ is $(E,\G)$-optimal, hence that $(E,\G)$ satisfies strong duality.
\end{proof}

The value of Proposition~\ref{prop:pseudo-strong-dual} is twofold.
First, it allows us to simplify the proofs of some later results by demonstrating quasi-strong duality in place of strong duality.
Second, it allows us to immediately upgrade many results from the existing literature, which primarily come in the form of quasi-strong duality, into statements of strong duality.
For example, it is known (see \cite[Proposition~3.1]{Georgii} and the remarks thereafter) that if $G$ countable group acting measurably on a standard Borel space $(\Omega,\F)$, then the orbit equivalence relation
\begin{equation*}
	E_{G} := \{(\omega,\omega')\in\Omega\times\Omega: g\cdot\omega = g'\cdot\omega' \textrm{ for some } g\in G \}
\end{equation*}
is measurable, the invariant $\sigma$-algebra
\begin{equation*}
	\invar_{G} := \{A\in\F: g^{-1}(A) = A \textrm{ for all } g\in G\}
\end{equation*}
satisfies $E_G^{\ast} = \invar_G$, and the pair $(E_G,\invar_G)$ satisfies quasi-strong duality.
Thus we conclude:

\begin{theorem}\label{thm:ergodic-duality}
	If $G$ is a countable group acting measurably on a standard Borel space, then $(E_G,\invar_G)$ satisfies strong duality.
\end{theorem}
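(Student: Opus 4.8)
The plan is to obtain Theorem~\ref{thm:ergodic-duality} as a direct corollary of Proposition~\ref{prop:pseudo-strong-dual}, which reduces strong dualizability of an equivalence relation to the conjunction of measurability and pseudo-strong dualizability. Accordingly, it suffices to verify three things: that $E_G$ is a measurable equivalence relation, that $E_G^{\ast} = \invar_G$, and that $(E_G,\invar_G)$ satisfies pseudo-strong duality.

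First I would note that $E_G$ is tautologically an equivalence relation, with reflexivity coming from the identity element of $G$, symmetry from inverses, and transitivity from the group operation. The measurability of $E_G$, the identity $E_G^{\ast} = \invar_G$, and the pseudo-strong duality of the pair $(E_G,\invar_G)$ are then exactly the content of \cite[Proposition~3.1]{Georgii} and the remarks following it, which apply precisely because $G$ is countable and acts measurably on a standard Borel space. One point worth checking carefully is that the notion of ``pseudo-strong duality'' invoked there matches the formulation of the excerpt, namely the equivalence between $\P$ and $\P'$ agreeing on all of $\invar_G$ and the existence of a coupling that is concentrated, off a measurable $(\F\otimes\F)$-null set, on $E_G$; since $E_G$ is itself measurable, condition (ii) of pseudo-strong duality simplifies here to the bare existence of an $E_G$-successful coupling, so this identification is routine.

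Given these inputs, Proposition~\ref{prop:pseudo-strong-dual} immediately yields that $E_G$ is strongly dualizable, i.e. that $(E_G,E_G^{\ast}) = (E_G,\invar_G)$ satisfies strong duality, which is the assertion. The genuine mathematical work sits entirely upstream: either in Proposition~\ref{prop:pseudo-strong-dual}, which is already established in the excerpt and whose proof supplies the existence of a primal minimizer via the Hahn--Jordan decomposition and the sub-coupling completion of Lemma~\ref{lem:completion}, or in the pseudo-strong duality of $(E_G,\invar_G)$ itself, which is the substantive ergodic-theoretic ingredient (proved in \cite{Georgii} by the standard orbit-selection and exhaustion arguments). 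With both of those in hand, the present theorem is pure bookkeeping, and I do not anticipate any obstacle in assembling it.
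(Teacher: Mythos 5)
Your proposal is correct and follows exactly the route the paper takes: cite Georgii for the measurability of $E_G$, the identity $E_G^{\ast}=\invar_G$, and pseudo-strong duality of the pair, then invoke Proposition~\ref{prop:pseudo-strong-dual} to upgrade to strong duality. Nothing is missing; the bookkeeping check you flag (that Georgii's conclusion matches condition (ii) of pseudo-strong duality once $E_G$ is known to be measurable) is indeed the only point worth verifying, and you handle it correctly.
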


The result of \cite{Georgii} in fact guarantees quasi-strong duality for a more general collection of semigroups acting measurably on a standard Borel space, and thus we get an analogous statement of strong duality for free.
However, the precise sufficient conditions of \cite{Georgii} are rather cumbersome to state, so we omit them for brevity's sake.
Suffice it to say that quasi-strong duality is known for most orbit equivalence relations, so our results establish strong duality for most measurable orbit equivalence relations.

\subsection{Optimality Conditions}

Before we get to the next subsection in which we prove our main results of interest on strong duality, we address a manifestation of a common principle in mathematical optimization, that identifying a useful duality between classes of optimization problems can lead one to optimality conditions.

Along the way, we will need an important intermediate result.
It establishes that \eqref{eqn:dual} always admits a maximizer, and that the maximization problem is equivalent to a suitable convex relaxation.
Since this result is well-known, we omit its proof.

\begin{lemma}\label{lem:dual-max}
	For any sub-$\sigma$-algebra $\G$ of $\F$ and any $\P,\P'\in \mathcal{P}(\Omega,\F)$, we have
	\begin{equation*}
		\max_{A\in\G}|\P(A)-\P'(A)| = \max_{\substack{f\in b\G \\ 0\le f\le 1}}\left|\int_{\Omega} f\, d\P - \int_{\Omega} f\, d\P'\right|
	\end{equation*}
\end{lemma}

Now we turn to the question of characterizing optimal solutions to the problems \eqref{eqn:primal} and \eqref{eqn:dual}.
We primarily pursue the study minimizers of \eqref{eqn:primal}, which, as we have indicated, might not always exist.
Our main result in this direction is an exact characterization of minimizing couplings.

\begin{proposition}\label{prop:opt-cond}
	For a pair $(E,\G)$ satisfying strong duality and any $\P,\P'\in \mathcal{P}(\Omega,\F)$, a coupling $\tilde{\P}\in\Pi(\P,\P')$ is $(E,\G)$-optimal  if and only if $\tilde{\P}(\cdot \setminus E)\circ\pi^{-1}$ and $\tilde{\P}(\cdot \setminus E)\circ(\pi')^{-1}$ are mutually singular on $(\Omega,\G)$.
\end{proposition}

\begin{proof}
	For the ``if'' direction, we suppose that $\tilde{\P}(\cdot \setminus E)\circ\pi'^{-1}$ and $\tilde{\P}(\cdot \setminus E)\circ(\pi')^{-1}$ are mutually singular on $(\Omega,\G)$, that is, that there exists $A\in \G$ such that we have $(\tilde{\P}(\cdot \setminus E)\circ\pi^{-1})(A) = 1-\tilde{\P}(E)$ and $(\tilde{\P}(\cdot \setminus E)\circ(\pi')^{-1})(A) = 0$.
	By Corollary~\ref{cor:weak-dual}, it now suffices to show that we have $1-\tilde{\P}(E) \le |\P(A)-\P'(A)|$.
	Indeed, we simply bound:
	\begin{align*}
		1-\tilde{\P}(E) &= \tilde{\P}(\{(\omega,\omega')\in\Omega\times\Omega: \ind_{A}(\omega)= 1, \ind_{A}(\omega') = 0\}\setminus E) \\
		&= \int_{\Omega\times\Omega}(\ind_{A}(\omega)-\ind_{A}(\omega'))\,d(\tilde{\P}(\cdot\setminus E))(\omega,\omega') \\
		&= \int_{\Omega\times\Omega}(\ind_{A}(\omega)-\ind_{A}(\omega'))\,d\tilde{\P}(\omega,\omega) = \P(A) - \P'(A) \le |\P(A) - \P'(A)|.
	\end{align*}
	For the ``only if'' direction, we suppose that $\tilde{\P}\in\Pi(\P,\P')$ is $E$-optimal for $\P,\P'$.
	By Lemma~\ref{lem:dual-max}, there is some event $A\in \G$ that is $\G$-optimal for $\P,\P'$.
	Note that $A\in \G$ is equivalent to having $|\ind_{A}(\omega) - \ind_{A}(\omega')| = 1-\ind_{E}(\omega,\omega')$ for all $\omega,\omega'\in\Omega$.
	At the same time, we can compute:
	\begin{align*}
		|\P(A) - \P'(A)| &= \left|\int_{\Omega\times\Omega}(\ind_{A}(\omega)-\ind_{A}(\omega'))\, d\tilde{\P}(\omega,\omega')\right| \\
		&\le \int_{\Omega\times\Omega}|\ind_{A}(\omega)-\ind_{A}(\omega')|\, d\tilde{\P}(\omega,\omega') \\
		&= \int_{\Omega\times\Omega}1-\ind_{E}(\omega,\omega')\, d\tilde{\P}(\omega,\omega') = 1-\tilde{\P}(E)
	\end{align*}
	However, note that $\tilde{\P}$ being $E$-optimal and $A$ being $\G$-optimal combine with strong duality to show that $|\P(A) - \P'(A)| = 1-\tilde{\P}(E)$.
	Consequently, the inequality above is an equality, and we have
	\begin{equation*}
		\left|\int_{\Omega\times\Omega}(\ind_{A}(\omega)-\ind_{A}(\omega'))\, d\tilde{\P}(\omega,\omega')\right| = \int_{\Omega\times\Omega}|\ind_{A}(\omega)-\ind_{A}(\omega')|\, d\tilde{\P}(\omega,\omega').
	\end{equation*}
	This implies that we have either
	\begin{equation*}
		\tilde{\P}(\{(\omega,\omega')\in\Omega\times\Omega: \ind_{A}(\omega)\ge \ind_{A}(\omega') \}) = 1
	\end{equation*}
	or
	\begin{equation*}
		\tilde{\P}(\{(\omega,\omega')\in\Omega\times\Omega: \ind_{A}(\omega)\le\ind_{A}(\omega') \}) = 1.
	\end{equation*}
	In the first case, we use $|\ind_{A}(\omega)-\ind_{A}(\omega')| = 1$ for $(\omega,\omega')\in (\Omega\times\Omega)\setminus E$ to get
	\begin{equation*}
		\tilde{\P}(\{(\omega,\omega')\in\Omega\times\Omega: \ind_{A}(\omega)=1, \ind_{A}(\omega') =0\}\setminus E) = 1-\tilde{\P}(E)
	\end{equation*}
	hence $(\tilde{\P}(\cdot\setminus E)\circ \pi^{-1})(A) =1-\tilde{\P}(E)$ and $(\tilde{\P}(\cdot\setminus E)\circ (\pi')^{-1})(A) = 0$.
	In the second case, the same argument shows
	\begin{equation*}
		\tilde{\P}(\{(\omega,\omega')\in\Omega\times\Omega: \ind_{A}(\omega)=0, \ind_{A}(\omega') =1\}\setminus E) = 1-\tilde{\P}(E)
	\end{equation*}
	hence $(\tilde{\P}(\cdot\setminus E)\circ \pi^{-1})(A) =0$ and $(\tilde{\P}(\cdot\setminus E)\circ (\pi')^{-1})(A) = 1-\tilde{\P}(E)$.
	In either case, we have shown that $\tilde{\P}(\cdot\setminus E)\circ \pi^{-1}$ and $\tilde{\P}(\cdot\setminus E)\circ (\pi')^{-1}$ are mutually singular on $(\Omega,\G)$, as claimed.
\end{proof}

We believe, but are unable to prove, that the previous condition for minimality should be necessary and sufficient in great generality.
For one indication of this, we recall the result \cite[Theorem~1]{BetterTransport}: For a measurable cost function on a standard Borel space, a coupling is minimal for the Monge-Kantorovich optimal transport problem if and only if it is concentrated on a cyclically monotone set.
Thus, our condition is necessary and sufficient if one can show that a set $S\subseteq \Omega\times\Omega$ is $(1-\ind_{E})$-cyclically monotone if and only if there exists an event $A\in \G$ satisfying $S\setminus E \subseteq (A\times \Omega)\cup (\Omega\times (\Omega\setminus A))$.
We believe this is an interesting avenue for future work.

One can certainly also use this duality to characterize maximizers of \eqref{eqn:dual}, but the resulting condition is not so useful:
An event $A\in\G$ is $\G$-optimal if and only if there exists a coupling $\tilde{\P}\in \Pi(\P,\P')$ such that $A$ witnesses the mutual singularity of $\tilde{\P}(\cdot\setminus E)\circ \pi^{-1}$ and $\tilde{\P}(\cdot\setminus E)\circ (\pi')^{-1}$.
We believe it would be interesting to see if this condition can be re-intrepret in a manner which does not reference a quantifier over all possible couplings $\tilde{\P}\in \Pi(\P,\P')$.

\subsection{Strong Duality}\label{subsec:strong}

In this subsection, we finally devote our attention to strong duality.
In particular, we state and prove our main sufficient conditions for strong dualizability for measurable equivalence relations.

\begin{theorem}\label{thm:basic-dual}
	If $(\Omega,\F)$ is a standard Borel space and if a pair $(E,\G)$ satisfies $E\subseteq \G^{\ast}$ and (equivalently, or) $\G\subseteq E^{\ast}$ as well as $E\in \G\otimes \G$, then $(E,\G)$ satisfies strong duality.
\end{theorem}

\begin{proof}
	We see that $\G\subseteq\F$ implies that $E$ is measurable, so, by Proposition~\ref{prop:pseudo-strong-dual}, it suffices to show that $(E,\G)$ satisfies quasi-strong duality.
	Further, note that Proposition~\ref{prop:weak-dual} guarantees that (ii) implies (i) in the definition of quasi-strong duality, so it only remains to show that (i) implies (ii).
	That is, for any $\P,\P'\in\mathcal{P}(\Omega,\F)$ satisfying $\P(A) = \P'(A)$ for all $A\in \G$, we must construct an $E$-successful coupling.
	
	Our construction is as follows.
	First, define $\nu := \P|_{\G}$ as a probability measure on $(\Omega,\G)$, and note by assumption that we also have $\nu = \P'|_{\G}$.
	Second, we use the fact that $(\Omega,\F)$ is standard Borel to get \cite[Corollary~10.4.6]{Bogachev} a regular conditional distribution of $\P$ with respect to $\G$ denoted $K:\Omega\times\F\to [0,1]$ as well as a regular conditional probability of $\P'$ with respect to $\G$ denoted $K':\Omega\times\F\to [0,1]$.
	Next, we define the set-function $\tilde{\P}:\F\times\F\to [0,1]$ via
	\begin{equation*}
		\tilde{\P}(A\times A'):=\int_{\Omega}K(\omega,A)K'(\omega,A')\,d\nu(\omega)
	\end{equation*}
	for $A,A'\in \F$.
	The last step of the construction is to show that $\tilde{\P}$ extends uniquely to a probability measure on $(\Omega\times\Omega,\F\otimes\F)$, which (by a slight abuse of notation) we also denote by $\tilde{\P}$.
	This of course follows from Carath\'eodory's extension theorem  \cite[Theorem~2.5]{Kallenberg} if we can show that $\tilde{\P}$ is countably additive on the semi-ring $\F\times\F$, so suppose that we have $A\times A'= \bigcup_{n\in\N}(A_n\times A_n')$ for $A,A',\{A_n\}_{n\in\N}$, and $\{A_n'\}_{n\in\N}$ in $\F$ such that $\{A_n\times A_n'\}_{n\in\N}$ are disjoint.
	This implies $\ind_A\otimes \ind_{A'} = \sum_{n\in\N}(\ind_{A_n}\otimes \ind_{A_n'})$, so for a fixed $\omega\in\Omega$ we can take the probability of both sides under the product measure $K(\omega,\,\cdot\,)\otimes K'(\omega,\,\cdot\,)$ and we get
	\begin{equation*}
		K(\omega,A)K'(\omega,A') = \sum_{n\in\N}K(\omega, A_n)K'(\omega,A_n').
	\end{equation*}
	Now integrate both sides with respect to $\nu$, and use monotone convergence to get
	\begin{align*}
		\tilde{\P}(A\times A') &= \int_{\Omega}K(\omega,A)K'(\omega,A')\, d\nu(\omega) \\
		&= \int_{\Omega}\sum_{n\in\N}K(\omega, A_n)K'(\omega,A_n')\, d\nu(\omega) \\
		&= \sum_{n\in\N}\int_{\Omega}K(\omega, A_n)K'(\omega,A_n')\, d\nu(\omega) = \sum_{n\in\N}\tilde{\P}(A_n\times A_n').
	\end{align*}
	This is as desired, and thus completes the construction.
	To complete the proof, we need to verify two further properties about $\tilde{\P}$.

	First, we claim that $\tilde{\P}$ is a coupling of $\P$ and $\P'$.
	Indeed, for any $A\in \F$:
	\begin{align*}
		(\tilde{\P}\circ\pi^{-1})(A) &= \tilde{\P}(A\times \Omega) \\
		&=\int_{\Omega} K(\omega,A)K'(\omega,\Omega)\,d\nu(\omega) \\
		&=\int_{\Omega} K(\omega,A)\,d\nu(\omega) =\int_{\Omega} K(\omega,A)\,d\P(\omega) = \P(A),
	\end{align*}
	hence $\tilde{\P}\circ\pi^{-1}= \P$.
	The same calculation shows $\tilde{\P}\circ(\pi')^{-1}= \P'$.
	
	Second, we claim that $\tilde{\P}(E) = 1$.
	To do this, write $f:(\Omega,\G)\to(\Omega\times\Omega,\G\otimes \G)$ for the measurable map $f(\omega) := (\omega,\omega)$, and let us show that we have $\tilde{\P}(S) = \nu(f^{-1}(S))$ for all $S\in\G\otimes \G$.
	Indeed, it is straightforward to show that for all $A,A'\in\G$ we have
	\begin{align*}
	\tilde{\P}(A\times A') &=\int_{\Omega} K(\omega,A)K'(\omega,A')\,d\nu(\omega) \\
	&=\int_{\Omega} \ind_{A}(\omega)\ind_{A'}(\omega)\,d\nu(\omega) = \nu(A\cap A') = \nu(f^{-1}(A\times A')).
	\end{align*}
	In the second equality, we used that $K(\omega,A) = \ind_{A}(\omega)$ holds $\P$-almost surely hence $\nu$-almost surely, and that $K'(\omega,A') = \ind_{A'}(\omega)$ holds $\P'$-almost surely hence $\nu$-almost surely.
	This shows that the probability measures $\tilde{\P}$ and $\nu\circ f^{-1}$ agree on the $\pi$-system $\G\times \G$, so it follows that they agree on $\G\otimes \G$.
	Finally, we use $E\in \G\otimes \G$ to compute:
	\begin{equation*}
		\tilde{\P}(E) = \nu(f^{-1}(E)) \ge \nu(f^{-1}(\Delta)) = \nu(\Omega) = 1.
	\end{equation*}
	This finishes the proof.
\end{proof}

One may be tempted to think that Theorem~\ref{thm:basic-dual} is powerful enough to establish strong duality most cases of interest in probability theory.
However, many examples of $(E,\G)$ of interest do not satisfy $E\in\G\otimes\G$: importantly, the tail equivalence relation $E_0$ and the tail $\sigma$-algebra $\tail$ are such that $(E_0,\tail)$ satisfies strong duality (to see this, combine \cite{Goldstein} with Proposition~\ref{prop:pseudo-strong-dual}), but $E_0\notin \tail\otimes\tail$. 
(The fact that $E_0\notin \tail\otimes\tail$ is closely related to the Glimm-Effros dichotomy \cite[Theorem~6.5]{KechrisCBER}, but it can also be shown directly via elementary considerations.)
Thus, our goal is to widen our sufficient conditions to include $(E_0,\tail)$.
Towards filling this gap, we establish our next main result:

\begin{theorem}\label{thm:ctble-dual}
	On a standard Borel space $(\Omega,\F)$, if equivalence relations $E_1\subseteq E_2\subseteq\cdots$ on $\Omega$ and sub-$\sigma$-algebras $\G_1\supseteq\G_2\supseteq \cdots$ of $\F$ are such that for each $n\in\N$ the pair $(E_n,\G_n)$ satisfies strong duality, then the pair $(\bigcup_{n\in\N}E_n,\bigcap_{n\in\N}\G_n)$ satisfies strong duality.
\end{theorem}

\begin{proof}
	For convenience, write $E:= \bigcup_{n\in\N}E_n$ and $\G :=\bigcap_{n\in\N}\G_n$.
	Since $E_n$ is measurable for all $n\in\N$ it follows that $E$ is measurable.
	Hence, it suffices by Proposition~\ref{prop:pseudo-strong-dual} to show that $(E,\G)$ satisfies quasi-strong duality.
	To see that (ii) implies (i) in the definition of quasi-strong duality, use Proposition~\ref{prop:weak-dual} to see that, for each $n\in\N$, the pair $(E_n,\G_n)$ being strongly hence weakly dual implies $E_n\subseteq \G^{\ast}_n$ for all $n\in\N$.
	Now the antimonotonicity of the Galois correspondence gives $E_n\subseteq \G_n^{\ast}\subseteq \G^{\ast}$ for all $n\in\N$ hence $E\subseteq \G^{\ast}$.
	Thus, Proposition~\ref{prop:weak-dual}, shows that $(E,\G)$ satisfies weak duality, which implies that (ii) implies (i) in the definition of quasi-strong duality.
	To see that (i) implies (ii), we take any $\P,\P'\in \mathcal{P}(\Omega,\F)$ with $\P(A) = \P'(A)$ for all $A\in \G$, and we show how to construct an $(E,\G)$-optimal coupling.
	
	To begin, set $\P_0 := \P$ and $\P_0' := \P'$.
	Then, inductively for $n\in\N$, use the strong duality of $(E_{n},\G_n)$ to let $\tilde{\P}_n$ be an $(E_n,\G_n)$-optimal coupling of $\P_n$ and $\P_n'$, and set $\P_{n+1} := \P_n - \tilde{\P}_{n}(\cdot\cap E_n)\circ \pi^{-1}$ and $\P_{n+1}' := \P_n' - \tilde{\P}_n(\cdot\cap E_n)\circ (\pi')^{-1}$.
	To ensure that this construction is well-defined, we must verify that $\P_{n}$ and $\P_{n}'$ are, for each $n\in\N$, sub-probability measures with the same total mass.
	Indeed, we claim by induction on $n\in\N$ that
	\begin{equation*}
		\P_{n}(\Omega) = \P_{n}'(\Omega) = 1-\sum_{m=0}^{n-1}\tilde{\P}_m(E_m).
	\end{equation*}
	The base case $n=0$ holds because $\P_0$ and $\P_0'$ are probability measures, and the inductive step for $n\in\N$ follows from combining
	\begin{align*}
		\P_{n+1}(\Omega) &= \P_n(\Omega)-(\tilde{\P}_n(\cdot\cap E_n)\circ \pi'^{-1})(\Omega) \\
		&= \P_n(\Omega)-\tilde{\P}_n(E_n) \\
		&= 1-\sum_{m=0}^{n-1}\tilde{\P}_m(E_m)-\tilde{\P}_n(E_n) = 1-\sum_{m=0}^{n}\tilde{\P}_m(E_m)
	\end{align*}
	with the analogous calculation for $\P_{n+1}'(\Omega)$.
	
	Next, fix $n\in\N$, and recall that by construction we have $\P_{n+1} = \P - \sum_{m=0}^{n}\tilde{\P}_m(\cdot \cap E_m)\circ \pi^{-1}$ and $\P_{n+1}' = \P' - \sum_{m=0}^{n}\tilde{\P}_m(\cdot \cap E_m)\circ (\pi')^{-1}$.
	Thus, combining the $(E_{n+1},\G_{n+1})$-optimality of $\tilde{\P}_{n+1}$ with the triangle inequality, we get
	\begin{align*}
		1-&\sum_{m=0}^{n+1}\tilde{\P}_m(E_m) \\
		&= 1-\sum_{m=0}^{n}\tilde{\P}_m(E_m) - \tilde{\P}_{n+1}(E_{n+1}) \\
		&= \sup_{A\in \G_{n+1}}|\P_{n+1}(A)-\P_{n+1}'(A)| \\
		&\le \sup_{A\in \G_{n+1}}|\P(A) - \P'(A)| \\
		&+\sum_{m=0}^{n}\sup_{A\in \G_{n+1}}|(\tilde{\P}_{m}(\cdot\cap E_m)\circ \pi^{-1})(A) - (\tilde{\P}_m(\cdot\cap E_m)\circ (\pi')^{-1})(A)|.
	\end{align*}
	We claim that the sum in the last line above is equal to zero.
	In fact, we claim that all summands are equal to zero, in that for all $m=0,1,\ldots,n$ we have
	\begin{equation}\label{eqn:ctble-dual-1}
		\sup_{A\in \G_{n+1}}|(\tilde{\P}_{m}(\cdot\cap E_m)\circ \pi^{-1})(A) - (\tilde{\P}_m(\cdot\cap E_m)\circ (\pi')^{-1})(A)| = 0.
	\end{equation}
	Since
	\begin{equation*}
		(\tilde{\P}_{m}(\cdot\cap E_m)\circ \pi^{-1})(\Omega) = (\tilde{\P}_m(\cdot\cap E_m)\circ (\pi')^{-1})(\Omega) = \tilde{\P}_m(E_m),
	\end{equation*}
	it follows by normalizing that \eqref{eqn:ctble-dual-1} holds if and only if we have
	\begin{equation}\label{eqn:ctble-dual-2}
		\sup_{A\in \G_{n+1}}|(\tilde{\P}_{m}(\cdot\,|\, E_m)\circ \pi^{-1})(A) - (\tilde{\P}_m(\cdot\,|\, E_m)\circ (\pi')^{-1})(A)| = 0.
	\end{equation}
	By the strong duality of $(E_{n+1},\G_{n+1})$ we know that \eqref{eqn:ctble-dual-2} holds if and only if there exists an $E_{n+1}$-successful coupling of $\tilde{\P}_m(\cdot\,|\, E_m)\circ\pi^{-1}$ and $\tilde{\P}_m(\cdot\,|\, E_m)\circ(\pi')^{-1}$.
	Of course, the coupling $\tilde{\P}_m(\cdot\,|\,E_m)$ is exactly what is needed, since $E_m\subseteq E_{n+1}$ implies $\tilde{\P}_m(E_{n+1}\,|\,E_m) = 1$.
	Thus, we have shown
	\begin{equation}\label{eqn:ctble-dual-3}
		1-\sum_{m=0}^{n+1}\tilde{\P}_m(E_m) \le \sup_{A\in \G_{n+1}}|\P(A) - \P'(A)|
	\end{equation}
	for all $n\in\N$.
	
	Now let us get for each $n\in\N$ some $A_n\in \G_n$ with
	\begin{equation*}
		|\P(A_n) - \P'(A_n)|\ge \sup_{A\in \G_{n}}|\P(A) - \P'(A)|-\frac{1}{2^n}
	\end{equation*}
	Now consider the Hilbert space $L^2(\Omega,\F,\frac{1}{2}(\P+\P'))$, in which $\{\ind_{A_n}\}_{n\in\N}$ form a norm-bounded sequence.
	By the Banach-Alaoglu theorem, there exists a subsequence $\{n_j\}_{j\in\N}$ and some $f\in L^2(\Omega,\F,\frac{1}{2}(\P+\P'))$ with $\ind_{A_{n_j}}\to f$ weakly.
	Since $L^2(\Omega,\G_n,\frac{1}{2}(\P+\P'))$ is a strongly closed subspace of $L^2(\Omega,\F,\frac{1}{2}(\P+\P'))$, it follows that it is also weakly closed.
	This implies that $f\in L^2(\Omega,\G_n,\frac{1}{2}(\P+\P'))$ for all $n\in\N$, hence $f\in L^2(\Omega,\G,\frac{1}{2}(\P+\P'))$.
	Also, we have
	\begin{equation*}
		0 \le \lim_{j\to\infty}\int_{\Omega}\ind_{A_{n_j}}\ind_{\{f\le 0\}}\,d\left(\frac{\P+\P'}{2}\right) = \int_{\Omega}f\ind_{\{f\le 0\}}\,d\left(\frac{\P+\P'}{2}\right) \le 0
	\end{equation*}
	which shows that $f\ge 0$ holds $\P$- and $\P'$-almost surely; a similar argument shows that $f\le 1$ holds $\P$- and $\P'$-almost surely.
	Putting this all together, we conclude that there exists a function $g:\Omega\to\R$ which is $\G$-measurable and satisfies $0\le g\le 1$ such that $f=g$ both $\P$- and $\P'$-almost surely.
	Consequently, Lemma~\ref{lem:dual-max} gives:
	\begin{align*}
		\liminf_{n\to\infty}\sup_{A\in \G_{n}}|\P(A) - \P'(A)| &\le \lim_{j\to\infty}|\P(A_{n_j}) - \P'(A_{n_j})| \\
		&= \left|\int_{\Omega}f\,d\P - \int_{\Omega}f\, d\P'\right| \\
		&= \left|\int_{\Omega}g\,d\P - \int_{\Omega}g\, d\P'\right| \\
		&\le \sup_{A\in \G} |\P(A) - \P'(A)|.
	\end{align*}
	Therefore, we conclude
	\begin{equation}\label{eqn:ctble-dual-4}
		1-\sum_{n\in\N}\tilde{\P}_n(E_n) \le \sup_{A\in \G}|\P(A) - \P'(A)|.
	\end{equation}
	by taking $n\to\infty$ in \eqref{eqn:ctble-dual-3}.
	
	Now we have all of the ingredients to construct our coupling.
	First, set
	\begin{equation*}
		\tilde{\M} := \sum_{n\in\N}\tilde{\P}_n(\cdot\cap E_n),
	\end{equation*}
	which is evidently a sub-probability measure on $(\Omega\times\Omega,\F\otimes\F)$.
	Next, we claim that $\tilde{\M}$ is a sub-coupling of $\P$ and $\P'$.
	Indeed, for any $A\in \F$ and $n\in\N$:
	\begin{align*}
		\sum_{m=0}^{n}(\tilde{\P}_m(\cdot \cap E_m)\circ \pi^{-1})(A) &= \sum_{m=0}^{n}(\P_{m+1}(A)-\P_{m+1}(A)) \\
		&= \P(A)-\P_{n+1}(A) \\
		&\le \P(A).
	\end{align*}
	Thus, taking $n\to\infty$ we get
	\begin{equation*}
		(\tilde{\M}\circ \pi^{-1})(A) = \sum_{n\in\N}(\tilde{\P}_n(\cdot \cap E_n)\circ \pi^{-1})(A) \le \P(A)
	\end{equation*}
	as claimed.
	We also get $\tilde{\M}\circ \pi^{-1}\le \P$ by the same calculation.
	Lastly, we apply Lemma~\ref{lem:completion} to get some $\tilde{\P}\in \Pi(\P,\P')$ with $\tilde{\M}\le \tilde{\P}$.
	
	It only remains to show that $\tilde{\P}$ is $(E,\G)$-optimal.
	Indeed, note that for all $n\in\N$:
	\begin{equation*}
		\tilde{\M}(E_{n+1})  \ge \sum_{m=0}^{n+1}\tilde{\P}_m(E_m\cap E_{n+1}) = \sum_{m=0}^{n+1}\tilde{\P}_m(E_m).
	\end{equation*}
	Thus, taking $n\to\infty$ gives
	\begin{equation*}
		\tilde{\M}(E) \ge \sum_{n\in\N}\tilde{\P}_n(E_n).
	\end{equation*}
	Finally, by applying $\tilde{\M}\le \tilde{\P}$ and \eqref{eqn:ctble-dual-4}, we find
	\begin{equation*}
		1-\tilde{\P}(E) \le 1-\tilde{\M}(E) \le 1- \sum_{n\in\N}\tilde{\P}_n(E_n) \le \sup_{A\in \G}|\P(A) - \P'(A)|.
	\end{equation*}
	This shows that $\tilde{\P}$ is $(E,\G)$-optimal, so the result is proved.
\end{proof}

\subsection{Given an equivalence relation}\label{subsec:equiv}

In this subsection we consider the setting that $E$ is a given equivalence relation on $\Omega$ and that one wants to find a suitable sub-$\sigma$-algebra $\G$ of $\F$ such that $(E,\G)$ satisfies strong duality.
Indeed, we show that this is possible in great generality, and such results will follow readily by specializing the abstract results of the previous subsection.

The first result shows that $E^{\ast}$ is a canonical choice of $\G$:

\begin{proposition}\label{prop:strong-dual}
	For $E$ a relation on $\Omega$ and $\G$ a subset of $\F$ with $\Omega\in\G$, if the pair $(E,\G)$ satisfies strong duality, then $(E,E^{\ast})$ satisfies strong duality.
\end{proposition}

\begin{proof}
	By Proposition~\ref{prop:weak-dual}, the weak duality of $(E,\G)$ implies $\G\subseteq E^{\ast}$.
	Thus, for any $\P,\P'\in \mathcal{P}(\Omega,\F)$ we can use the strong duality of $(E,\G)$ to bound:
	\begin{equation*}
		\sup_{A\in E^{\ast}}|\P(A)-\P'(A)| \ge \sup_{A\in\G}|\P(A)-\P'(A)| = \min_{\tilde{\P}\in \Pi(\P,\P')}(1-\tilde{\P}(E))
	\end{equation*}
	At the same time, Corollary~\ref{cor:weak-dual} gives the bound:
	\begin{equation*}
		\sup_{A\in E^{\ast}}|\P(A)-\P'(A)| \le \inf_{\tilde{\P}\in \Pi(\P,\P')}(1-\tilde{\P}(E)).
	\end{equation*}
	Combining these completes the proof.
\end{proof}

Motivated by this result, a measurable equivalence relation $E$ on $\Omega$ has $(E,E^{\ast})$ satisfying strong duality if and only if there exists a sub-$\sigma$-algebra $\G$ of $\F$ such that $(E,\G)$ satisfies strong duality; in this case we say that $E$ is \textit{strongly dualizable}.
Let us also say that $E$ is \textit{weakly dualizable} if $(E,E^{\ast})$ satisfies weak duality, and that it is \textit{quasi-strongly dualizable} if $(E,E^{\ast})$ satisfies quasi-strong duality.

Next, we require some basic definitions.
Let us say that an equivalence relation $E$ on a standard Borel space $(\Omega,\F)$ is \textit{smooth} if there exists a standard Borel space $(X,\mathcal{X})$ and some measurable map $\phi:(\Omega,\F)\to (X,\mathcal{X})$ such that $\omega,\omega'\in \Omega$ have $(\omega,\omega')\in E$ if and only if $\phi(\omega) = \phi(\omega')$.
Roughly speaking, a smooth equivalence relation is one that is ``generated by'' equality of Polish-space valued random variables.
Then, we have the following equivalent characterizations of smoothness:

\begin{lemma}\label{lem:smooth-basic}
	For an equivalence relation $E$ on $\Omega$, the following are equivalent:
	\begin{enumerate}
		\item[(i)] $E$ is smooth.
		\item[(ii)] $E^{\ast}$ is countably-generated.
		\item[(iii)] $E\in E^{\ast}\otimes E^{\ast}$.
	\end{enumerate}
\end{lemma}

\begin{remark}
	Although it is not in our notation, the properties (i), (ii), and (iii) all depend on the ambient $\sigma$-algebra $\F$ with which $\Omega$ is endowed. (cf. Remark~\ref{ref:invariant-salg-ambient})
\end{remark}

\begin{proof}
	It is classical that (i) and (ii) are equivalent (see \cite[Exercise~5.1.10]{Srivastava}), so it suffices to prove that (i) and (iii) are equivalent.
	To show (i) implies (iii), suppose $E$ is smooth so there exists a standard Borel space $(X,\mathcal{X})$ and a measurable map $\phi:\Omega\to X$ such that $\omega,\omega'\in \Omega$ have $(\omega,\omega')\in E$ if and only if $\phi(\omega) =\phi(\omega')$.
	It readily follows that $\phi:(\Omega,E^{\ast})\to (X,\mathcal{X})$ is measurable, hence also that $f:(\Omega\times\Omega,E^{\ast}\otimes E^{\ast})\to (X\times X,\mathcal{X}\otimes\mathcal{X})$ defined via $f(\omega,\omega'):=(\phi(\omega),\phi(\omega'))$ is measurable.
	Finally, observe that we have $\Delta\in \mathcal{X}\otimes\mathcal{X}$ since $(X,\mathcal{X})$ is standard Borel, hence $E =f^{-1}(\Delta)\in E^{\ast}\otimes E^{\ast}$.
	Thus,  (iii) holds.
	
	It requires a bit more work to show (iii) implies (i), so suppose $E\in E^{\ast}\otimes E^{\ast}$.
	It is classical that $\Sigma_1 := \{B\in E^{\ast}\otimes E^{\ast}: \textrm{there exist } A_1,A_2,,\ldots \in E^{\ast}\times E^{\ast} \textrm{ with } B\in \sigma(A_m\times A_n: m,n\in\N)\}$ is a $\sigma$-algebra containing $E^{\ast}\times E^{\ast}$, hence $E^{\ast}\otimes E^{\ast}\subseteq \Sigma_1$.
	In particuar, there exist $A_1,A_2,\ldots \in E^{\ast}\times E^{\ast}$ with $E\in \sigma(A_m\times A_n: m,n\in\N)$.
	
	Next, we aim to show that the equivalence classes $[\omega]_{E} := \{\omega'\in \Omega: (\omega,\omega')\in E\}$ are in $\sigma(A_n: n\in\N)$ for all $\omega\in \Omega$.
	To do this, take arbitrary $\omega\in\Omega$ and define $\Sigma_2(\omega) := \{B\in E^{\ast}\otimes E^{\ast}: ([\omega]_E\times [\omega]_E)\cap B\in \sigma(A_m\times A_n: m,n\in\N)\}$, which is easily seen to be a $\sigma$-algebra.
	Moreover, observe that for all $m,n\in\N$ we have
	\begin{equation*}
		([\omega]_E\times [\omega]_E)\cap (A_m\times A_n) = \begin{cases}
			A_m\times A_n, &\textrm{ if } \omega\in A_m\cap A_n, \\
			\emptyset, &\textrm{ if } \omega\notin A_m\cap A_n,
		\end{cases}
	\end{equation*}
	since $A_m,A_n\in E^{\ast}$.
	This implies $\sigma(A_n\times A_m: m,n\in\N)\subseteq \Sigma_2(\omega)$, hence $E\in \Sigma_2(\omega)$.
	Consequently, $[\omega]_E\times [\omega]_E = ([\omega]_E\times [\omega]_E)\cap E\in\sigma(A_m\times A_n: m,n\in\N)\subseteq \sigma(A_n: n\in\N)\otimes \sigma(A_n: n\in\N)$.
	Now Fubini's theorem gives $[\omega]_E\in\sigma(A_n: n\in\N)$, as claimed.
	
	Moving on, we claim that,  for all $\omega,\omega'\in\Omega$ with $(\omega,\omega')\notin E$, there exists $n\in\N$ such that we have either $[\omega]_{E}\subseteq A_n$ and $[\omega']_{E}\cap A_n = \emptyset$ or $[\omega]_{E}\subseteq \Omega\setminus A_n$ and $[\omega']_{E}\subseteq A_n$.
	If this is not true, then, recalling $\{A_n\}_{n\in\N}\subseteq E^{\ast}$, there must exist $\omega,\omega'\in\Omega$ with $(\omega,\omega')\notin E$ such that for all $n\in\N$ we have $[\omega]_{E},[\omega']_{E}\subseteq A_n$ or $[\omega]_{E},[\omega']_{E} \subseteq \Omega\setminus A_n$.
	But $\Sigma_3(\omega,\omega') := \{A\in E^{\ast}: [\omega]_E,[\omega']_E\subseteq A \textrm{ or } [\omega]_E,[\omega']_E\subseteq \Omega\setminus A\}$ is a $\sigma$-algebra, so $\sigma(A_n: n\in\N)\subseteq \Sigma_3(\omega,\omega')$.
	This contradicts the conclusion of the previous paragraph.
	
	Finally, we define the function $\phi:(\Omega,\F)\to (\R,\mathcal{B}(\R))$ via the summation $\phi(\omega) := \sum_{n\in\N}3^{-n}\ind_{A_n}(\omega)$ for all $\omega\in\Omega$, which is clearly measurable.
	Also, the previous paragraph shows that $\omega,\omega'\in\Omega$ have $(\omega,\omega')\in E$ if and only if $\phi(\omega) = \phi(\omega')$.
	Therefore, $E$ is smooth, so (i) holds.
	This finishes the proof.
\end{proof}

This characterization leads us to our main result:

\begin{corollary}\label{cor:main-eq}
	On a standard Borel space, any equivalence relation that can be written as a countable increasing union of smooth equivalence relations is strongly dualizable.
\end{corollary}

\begin{proof}
	Suppose that $E_1\subseteq E_2\subseteq\cdots$ are smooth equivalence relations and $E=\bigcup_{n\in\N}E_n$.
	For each $n\in\N$, we have $E_n\in E_n^{\ast}\otimes E_n^{\ast}$ by Lemma~\ref{lem:smooth-basic} and $E_n\subseteq E_n^{\ast\ast}$ by part (i) of Lemma~\ref{lem:Galois-basics}; thus, Theorem~\ref{thm:basic-dual} implies that $(E_n,E_n^{\ast})$ satisfies strong duality for all $n\in\N$.
	Consequenently, Theorem~\ref{thm:ctble-dual} implies that $(E,\bigcap_{n\in\N}E_n^{\ast})$ satisfies strong duality, hence $E$ is strongly dualizable.
\end{proof}

In descriptive set theory, an equivalence relation that can be written as a countable increasing union of smooth equivalence relations is called a \textit{hypersmooth} equivalence relation.

We also prove the following result, which is interesting from the point of view of descriptive set theory,
Recall that a \textit{Borel equivalence relation} is a measurable equivalence relation on a standard Borel space, and that a \textit{countable Borel equivalence relation} is a Borel equivalence relation whose equivalences classes are all countable.

\begin{corollary}\label{cor:CBER-dual}
	On a standard Borel space, all countable Borel equivalence relations are strongly dualizable.
\end{corollary}

\begin{proof}
	The Feldman-Moore theorem \cite[Theorem~2.3]{KechrisCBER} states that, for any countable Borel equivalence relation $E$ on a standard Borel space $(\Omega,\F)$, there exists a countable group $G$ acting measurably on $\Omega$ such that $E=E_{G}$.
	Therefore, Theorem~\ref{thm:ergodic-duality} guarantees that $E$ is strongly dualizable.
\end{proof}

\subsection{Given a sub-$\sigma$-algebra}\label{subsec:salg}

In this subsection we consider the setting that $\G$ is a given sub-$\sigma$-algebra of $\F$ and that one wants to find a suitable equivalence relation $E$ on $\Omega$ such that $(E,\G)$ satisfies strong duality.
As before, we show that this is possible in great generality, as a consequence of our general results.

We begin, however, with a negative result which shows that the task in this setting can be impossible:

\begin{example}\label{ex:asymmetry}
	Take $\Omega := [0,1]$ with $\F$ its Borel $\sigma$-algebra, and let $\G$ be the $\sigma$-algebra of all subsets of $\Omega$ which are countable or whose complements are countable.
	Now suppose that $E$ is an equivalence relation on $\Omega$ such that the pair $(E,\G)$ satisfies strong duality.
	By Proposition~\ref{prop:weak-dual}, we have $\G^{\ast}\supseteq E$.
	Since $\G$ separates points, we also have $\G^{\ast} = \Delta$.
	Thus, these considerations give $E \subseteq \G^{\ast} = \Delta$.
	Since $E$ is an equivalence relation, it also satisfies $E\supseteq \Delta$, hence we have $E = \Delta$.
	Then, let $\lambda$ denote the Lebesgue measure on $(\Omega,\F)$, and define probability measures $\P$ and $\P'$ via
	\begin{equation*}
		\frac{d\P}{d\lambda}(x) = 2x \qquad \textrm{ and } \qquad \frac{d\P'}{d\lambda}(x) = 2(1-x)
	\end{equation*}
	for $x\in \Omega$.
	It is easy to see that we have
	\begin{equation*}
		\sup_{A\in \G}|\P(A) - \P'(A)| = 0.
	\end{equation*}
	However, since we have $1-\ind_{\Delta}(x,x')\ge x-x'$ for all $x,x'\in\Omega$ it also follows that
	\begin{align*}
		\min_{\tilde{\P}\in\Pi(\P,\P')}(1-\tilde{\P}(\Delta)) \ge \min_{\tilde{\P}}\Bigg(&\int_{\Omega}xd\P(x) - \int_{\Omega}x'd\P'(x')\Bigg) \\
		=&\int_{0}^{1}2x^2d\lambda(x) - \int_{0}^{1}2x'(1-x')d\lambda(x') = \frac{2}{3}- \frac{1}{3} = \frac{1}{3} > 0.
	\end{align*}
	Since this contradicts the assumption that $(E,\G)$ satisfies strong duality, there can be no equivalence relation $E$ on $\Omega$ for which the pair $(E,\G)$ satisfies strong duality.
\end{example}

Despite this difficulty, we will develop some wide sufficient conditions for a positive answer, and these will be general enough to cover most cases of interest to probabilists.

Next we show that $\G^{\ast}$ is a (somewhat) canonical choice of $E$, although observe that it can be hard to directly check for the measurability of $\G^{\ast}$ given $\G$:

\begin{proposition}\label{prop:strong-dual-reverse}
	For $\G$ a sub-$\sigma$-algebra of $\F$ and $E$ an equivalence relation on $\Omega$, if the pair $(E,\G)$ satisfies strong duality and $\G^{\ast}$ is measurable, then $(\G^{\ast},\G)$ satisfies strong duality.
\end{proposition}

\begin{proof}
	By part (i) of Lemma~\ref{lem:Galois-basics} we have $\G\subseteq \G^{\ast\ast}$, so Proposition~\ref{prop:weak-dual} implies that $(\G^{\ast},\G)$ satisfies weak duality.
	To see that $(\G^{\ast},\G)$ in fact satisfies strong duality, take arbitrary $\P,\P'\in \mathcal{P}(\Omega,\F)$, and use the strong duality of $(E,\G)$ to get $\tilde{\P}\in \Pi(\P,\P')$ with
	\begin{equation*}
		\sup_{A\in\G}|\P(A)-\P'(A)| = 1-\tilde{\P}(E).
	\end{equation*}
	Now note that $(E,\G)$ satisfying strong duality implies that it satisfies weak duality, whence $E\subseteq \G^{\ast}$ by Proposition~\ref{prop:weak-dual}.
	Consequently, $1-\tilde{\P}(E) \ge 1-\tilde{\P}(\G^{\ast})$, and this completes the proof.
\end{proof}

As before, let us say that a sub-$\sigma$-algebra $\G$ of $\F$ is \textit{strongly dualizable} if $(\G^{\ast},\G)$ satisfies strong duality.
In contrast to the previous subsection, this need not be equivalent to the existence of some measurable equivalence relation $E$ on $\Omega$ such that $(E,\G)$ satisfies strong duality, since $\G^{\ast}$ can fail to be measurable.
Let us also say that $\G$ is \textit{weakly dualizable} or \textit{quasi-strongly dualizable} if $(\G^{\ast},\G)$ satisfies weak duality or quasi-strong duality, respectively.

Now we turn to another important result.

\begin{lemma}\label{lem:cg-characterization}
	If a sub-$\sigma$-algebra $\G$ of $\F$ is countably-generated, then $\G^{\ast}\in\G\otimes\G$.
\end{lemma}

\begin{proof}
	Suppose that there exists a sequence $\{A_n\}_{n\in\N}$ in $\G$ satisfying $\G = \sigma(A_n: n\in\N)$.
	We claim in this case that
	\begin{equation*}
		\G^{\ast} = \left\{(\omega,\omega')\in \Omega\times\Omega: \forall n\in\N(\omega\in A_n\Leftrightarrow \omega'\in A_n)\right\}.
	\end{equation*}
	It is obvious that the right side contains $\G^{\ast}$, so it only remains to show that the right side is contained in $\G^{\ast}$.
	Indeed, suppose that $(\omega,\omega')$ is an element of the right side.
	Then $\Sigma(\omega,\omega'):=\{A\in\G: \omega\in A\Leftrightarrow \omega'\in A\}$ is a $\sigma$-algebra containing $\{A_n\}_{n\in\N}$, so it must contain $\sigma(A_n: n\in\N)=\G$, and this yields $(\omega,\omega')\in \G^{\ast}$.
	Thus, we can write the expression above as
	\begin{align*}
		\G^{\ast} &= \bigcap_{n\in\N}(((\Omega\setminus A_n)\times \Omega)\cup(\Omega\times A_n)) \\
		&\qquad\qquad \cap \bigcap_{n\in\N}((\Omega\times (\Omega\setminus A_n))\cup(A_n\times \Omega)).
	\end{align*}
	Since each set on the right side is in $\G$, it follows that $\G^{\ast}\in\G\otimes\G$.
\end{proof}

Finally, we come to the main result of this subsection:

\begin{corollary}\label{cor:ctble-dual-salg}
	On a standard Borel space, any sub-$\sigma$-algebra that can be written as a countable decreasing intersection of countably-generated sub-$\sigma$-algebras is strongly dualizable.
\end{corollary}

\begin{proof}
	Suppose that $\G_1\supseteq \G_2\supseteq\cdots$ are countably-generated sub-$\sigma$-algebras and $\G=\bigcap_{n\in\N}\G_n$.
	For each $n\in\N$, we have $\G_n^{\ast}\in \G_n\otimes \G_n$ by Lemma~\ref{lem:cg-characterization} and $\G_n\subseteq \G_n^{\ast\ast}$ by part (i) of Lemma~\ref{lem:Galois-basics}; thus, Theorem~\ref{thm:basic-dual} implies that $(\G_n^{\ast},\G_n)$ satisfies strong duality for all $n\in\N$.
	Consequently, Theorem~\ref{thm:ctble-dual} implies that $(\bigcup_{n\in\N}\G_n^{\ast},\G)$ satisfies strong duality.
	Thus, it only remains to show that $\bigcup_{n\in\N}\G_n^{\ast} = \G^{\ast}$.
	To do this, note that we readily have $\bigcup_{n\in\N}\G_n^{\ast}\subseteq (\bigcap_{n\in\N}\G_n)^{\ast}$ by the antimonotonicity of the Galois correspondence.
	For the converse, suppose that $(\omega,\omega')$ is not in the left side.
	Then for each $n\in\N$ there exists $A_n\in \G_n$ with $\omega\in A_n$ and $\omega'\notin A_n$.
	It follows that the set $A:=\bigcap_{m\in\N}\bigcup_{n\ge m} A_n$ has $A\in \bigcap_{n\in\N}\G_n$ and $\omega\in A$ and $\omega'\notin A$, whence $(\omega,\omega')$ is not in the right side.
	This shows $\bigcup_{n\in\N}\G_n^{\ast}= (\bigcap_{n\in\N}\G_n)^{\ast} = \G^{\ast}$, so $\G$ is strongly dualizable.
\end{proof}

Some authors refer to a sub-$\sigma$-algebra that can be written as a countable decreasing intersection of countably-genreated sub-$\sigma$-algebras a \textit{tail $\sigma$-algebra}.

Let us conclude by giving an equivalent definition of $\G^{\ast}$ which cane be desirable in some situations.
We write $\G(\omega):= \cap\{A\in\G: \omega\in A\}$ for $\omega\in\Omega$ as the intersection of all events in $\G$ which contain $\omega\in\Omega$; this is called the \textit{atom} of $\G$ at $\omega$, and we write $\atom(\G) = \{\G(\omega): \omega\in\Omega\}$ for the collection of all atoms of $\G$.
Then it is easy to show that we have
\begin{equation*}
	\G^{\ast} := \bigcup_{H\in \atom(\G)}H\times H.
\end{equation*}
Note that, in general, atoms of $\G$ need not be $\G$-measurable and, as always, that $\G^{\ast}$ need not be $(\F\otimes\F)$-measurable.

The difficulties of this subsection, compared the previous subsection, are largely due to the fact that $\G^{\ast}$ need not be measurable.
Thus, in light of Proposition~\ref{prop:pseudo-strong-dual}, the study of quasi-strong duality may be more natural, compared to the study of strong duality, for sub-$\sigma$-algebras $\G$.







\providecommand{\bysame}{\leavevmode\hbox to3em{\hrulefill}\thinspace}
\providecommand{\MR}{\relax\ifhmode\unskip\space\fi MR }
\providecommand{\MRhref}[2]{%
	\href{http://www.ams.org/mathscinet-getitem?mr=#1}{#2}
}
\providecommand{\href}[2]{#2}


\subsection*{Acknowledgments}

This material is based upon work for which the author was supported by the National Science Foundation Graduate Research Fellowship under Grant No. DGE 1752814
We thank Steve Evans for many useful conversations, and we thank Jonathan Niles-Weed and Marcel Nutz for their insights about the relationship between strong duality and Kantorovich duality.
We also thank the anonymous reviewer for many suggestions that improved the quality of this paper.


\end{document}